\numberwithin{equation}{section}
\newtheorem{thm}{Theorem}[section]
\newtheorem{lem}[thm]{Lemma}
\newtheorem{cor}[thm]{Corollary}
\newtheorem {rem}[thm]{Remark}
\title{integral moment of the riemann zeta function and hecke $L$ functions}
\author{Zhaoyan Chen}
\address{School of Mathematics, Shandong University\\ Jinan 250100\\China}
\email{zychen@mail.sdu.edu.cn}
\begin{document}
	\begin{abstract}
		Let $f$ be a Hecke cusp form for $SL(2,\mathbb{Z})$. We prove an asymptotic formula for the mixed moment of the product of $\zeta(s)$ and $L(s,f)$ on the critical line. Similarly, we prove an asymptotic formula for the mixed moment of the product of $|\zeta^{2}(s)|$ and $L(s,f)$ on the critical line.
	\end{abstract}

\keywords{Hecke {$L$}-functions, Riemann zeta function, cusp forms.}

\maketitle
	\section{Introduction}
	In the history of analytic number theory, an important problem is to gain good understanding of the asymptotic formula for the integral mean of the automorphic $L$-function on the critical line. Classically, the first to study is the moments of the Riemann zeta function, that is
	$$
	\begin{aligned}
		I_{k}(T)=\int_{0}^{T}|\zeta(\frac{1}{2}+it)|^{2k} \mathrm{~d} t.
	\end{aligned}
	$$
	Hardy and Littlewood \cite{hardy1916contributions} established their famous result in 1916, providing the asymptotic formula for the first moment: 
	\begin{equation}\label{Hardy}
	\begin{aligned}
		I_{1}(T)\sim T \log T.
	\end{aligned}
\end{equation}
	Subsequently, Ingham \cite{ingham1928mean} got the asymptotic formula for the fourth mean, yielding:
	\begin{equation}\label{Ing}
		\begin{aligned}
			I_{2}(T)\sim\frac{1}{2 \pi^2} T(\log T)^4.
		\end{aligned}
	\end{equation}
    In 1979, Heath-Brown \cite{heath1979fourth} showed that the second moment can be expressed as:
    $$
    \begin{aligned}
    	I_{2}(T)=T\sum_{i=0}^{4}a_{i}(\log T)^{i}+O(T^{7/8+\varepsilon}).
    \end{aligned}
    $$ 
    Keating and Snaith \cite{keating2000random} conjectured that the general form for the $k$-th moment is given by:
	$$
	\begin{aligned}
		I_{k}(T)=T \mathcal{P}_k(\log T)+O\left(T^{1 / 2+\varepsilon}\right),
	\end{aligned}
	$$
	where $\mathcal{P}_k$ is a polynomial of degree $k^2$. The above conjecture has been proved for $k=1$ \cite{ingham1928mean} and $k=2$. When $k=2$, the error term $O(T^{1/2+\varepsilon})$ has been obtained in the case of a smooth weight function \cite{ivic1995fourth}. The current record for the error term in \eqref{Hardy} was due to Bourgain and Watt \cite{bourgain2018decoupling} who showed that $O(T^{\frac{1515}{4816}+\varepsilon})$. Zavorotny\u{\i} \cite{MR1683661} improved the error term in \eqref{Ing} to $O(T^{2/3+\varepsilon})$. The best known result to date, as presented in \cite{ivic1995fourth2}, is the following asymptotic formula:
	\begin{equation}\label{Ivic}
		\begin{aligned}
			I_{2}(T)=T\mathcal{P}_k(\log T)+O(T^{2/ 3}(\log T)^8).
		\end{aligned}
	\end{equation}
	Despite many attempts, the asymptotic formulas for higher moments have not been completely solved. 
	
	Due to good analytical properties of holomorphic cusp forms, many mathematicaians have also calculated the integral mean of the Hecke $L$-functions. For instance, Potter \cite{potter1940mean} obtained an asymptotic formula for the second moment when $\operatorname{Re}(s)>1/2$. In 1982, Good \cite{good1982square} extended this result to the critical line using the spectral theory of automorphic forms on $GL(2)$. The following asymptotic holds:
	\begin{equation}\label{good}
		\begin{aligned}
			\int_0^T\left|L\left(\frac{1}{2}+i t,f\right)\right|^2 d t=2 c_{-1} T\left\{\log \frac{T}{2 \pi e}+c_0\right\}+O\left((T \log T)^{2 / 3}\right).
		\end{aligned}
	\end{equation}
	Then Meurman \cite{meurman1987order} generalized the above result to the Maass case. Similarly, no effective asymptotic formula has been obtained for the case of higher moments. 
	
	Das and Khan \cite{das2015simultaneous} discussed the $q$-aspect case and proved the following asymptotic formula:
	$$
	\begin{aligned}
	\sideset{}{^*}\sum_{\substack{\chi \bmod q \\ \chi(-1)=1}} L\left(\frac{1}{2}, f \otimes \chi\right) \overline{L\left(\frac{1}{2}, \chi\right)}=\frac{q-2}{2} L(1, f)+O(q^{7/8+\theta+\varepsilon}),
	\end{aligned}
    $$
    where $f$ is a Hecke-Maass form for $SL(2,\mathbb{Z})$, $q$ is a prime and $\theta=7/64$. Motivated by \cite{das2015simultaneous}, the aim of this paper is to study the mean integral of the Hecke $L$-function and the Riemann zeta function over the central line. We will prove the following results.
	
	\begin{thm}\label{t1} Let $f$ be a Hecke cusp form (holomorphic or Maass) for $SL(2,\mathbb{Z})$. Suppose $V(x)$ is a smooth function with support contained in $[1,2]$, further satisfying the condition
		$$
		\begin{aligned}
			V^{(i)}(x)\ll\Delta^{i},
		\end{aligned}
		$$
		for $i\geq0$ with $\Delta\leq T^{1/2-\varepsilon}$. For any $\varepsilon>0$, then we have that
		\begin{equation}\label{eqt1}
			\begin{aligned}
				\int_{\mathbb{R}}V\left(\frac{t}{T}\right)L\left(\frac{1}{2}+it, f\right)\left|\zeta\left(\frac{1}{2}-it\right)\right|^{2}\mathrm{~d} t=\frac{c}{2}L(1,f)T\log T+c_{f}T +O(T^{\frac{1}{2}+\varepsilon}),
			\end{aligned}
		\end{equation}
	    where $c=\int_{\mathbb{R}}V(\xi)\mathrm{~d} \xi$ and the constant $c_{f}$ is given by
	    $$
	    \begin{aligned}
	    	c_{f}=\frac{1}{2\pi i}\int_{\mathbb{R}}V(\xi)\mathrm{~d} \xi\int_{(\varepsilon)}\frac{L(1-w,f)}{-w^2}e^{2w^2+i\pi   w/2} \mathrm{~d} w+L(1,f)\int_{\mathbb{R}}V(\xi)\left[\frac{i\pi}{4}+\frac{1}{2}\log\left(\frac{\xi}{2\pi}\right)\right]\mathrm{~d} \xi.
	    \end{aligned}
	    $$		
	    \end{thm}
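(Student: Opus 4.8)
The plan is to open the $L$-functions by their approximate functional equations, extract the diagonal as the main term, and estimate the off-diagonal by Voronoi summation on the cusp form; this last step is the crux.

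Write $\mathcal I(T)$ for the left-hand side of \eqref{eqt1}. Since $\overline{\zeta(\tfrac12-it)}=\zeta(\tfrac12+it)$ for $t\in\mathbb R$, we have $|\zeta(\tfrac12-it)|^2=\zeta(\tfrac12+it)\zeta(\tfrac12-it)$, so that $\mathcal I(T)=\int_{\mathbb R}V(t/T)\,L(\tfrac12+it,f)\,\zeta(\tfrac12+it)\,\zeta(\tfrac12-it)\,dt$. First I would apply the approximate functional equation to $L(\tfrac12+it,f)$, writing it as a principal Dirichlet sum $\sum_m\lambda_f(m)m^{-1/2-it}V_f(m,t)$ plus a dual sum $\varepsilon_f(\tfrac12+it)\sum_m\lambda_f(m)m^{-1/2+it}\widetilde V_f(m,t)$; here $\varepsilon_f(\tfrac12+it)$ is the archimedean root number, a ratio of Gamma-factors whose Stirling expansion --- once the cutoffs are written as Mellin--Barnes integrals in a shift variable $w$ with a Gaussian regularizer $e^{w^2}$ --- produces the phase $e^{i\pi w/2}$, and which, combined with the second Gaussian that enters when the zeta-factors are opened by their own (length $\asymp t^{1/2}$) approximate functional equation, accounts for the $e^{2w^2}$ occurring in the definition of $c_f$.

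After substituting I would carry out the $t$-integration first: the surviving oscillatory factor has the shape $(n/(m\ell))^{\pm it}$, with $m$ the $L(\cdot,f)$-variable and $n,\ell$ the zeta-variables, and integration against $V(t/T)$ pins everything down to the diagonal $n=m\ell$ up to a controlled error, except where the zeta functional equation forces a rapidly oscillating phase that must be handled by stationary phase. On the diagonal one is left with $T\int_{\mathbb R}V(\xi)\,d\xi$ times a weighted sum $\sum\tfrac{\lambda_f(m)}{m\ell}$ whose $\ell$-sum extends only to $\asymp T^{1/2}$ --- this truncation being the origin of the factor $\tfrac12$ in \eqref{eqt1} --- and which, by $\sum_m\lambda_f(m)m^{-1}=L(1,f)$ and $\sum_m\lambda_f(m)m^{-1}\log m=-L'(1,f)$ together with the rapid decay of the cutoffs, collapses into $\tfrac c2 L(1,f)T\log T$ plus a term of size $T$. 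That term, assembled with the stationary-phase evaluation of the non-principal pieces of the functional equations --- in particular the dual sum of the $L(\tfrac12+it,f)$-equation paired with $|\zeta(\tfrac12+it)|^2$, which after shifting the $w$-contour across the pole of the zeta-function at $w=0$ (compounded with the $1/w$ of the Mellin--Barnes kernel, whence the $1/(-w^2)$) contributes the integral $\int_{(\varepsilon)}\frac{L(1-w,f)}{-w^2}e^{2w^2+i\pi w/2}\,dw$ --- yields precisely $c_f T$.

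The principal difficulty is the off-diagonal. The terms with $n\ne m\ell$ form shifted convolution sums of the shape $\sum_{n-m\ell=h,\ h\ne0}\lambda_f(m)(\cdots)$; I would apply Voronoi summation to the $m$-variable, replacing $\lambda_f(m)$ by a dual sum against a Bessel-type kernel, and then integrate by parts (or apply stationary phase) in $t$. The hypothesis $\Delta\le T^{1/2-\varepsilon}$ is exactly what allows the $t$-integration to save a power of $T$, making these terms $O(T^{1/2+\varepsilon})$; uniformity in $m$ throughout the range of the approximate functional equation is essential here, together with standard estimates for the resulting exponential sums twisted by $\lambda_f$. The tails of all the smoothing weights are disposed of via the Gaussian decay in $w$ and repeated integration by parts in $t$. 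Collecting the diagonal main terms with these bounds gives \eqref{eqt1}.
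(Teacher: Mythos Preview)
Your overall architecture matches the paper's: open all three factors by approximate functional equations with Gaussian-regularized Mellin--Barnes weights, integrate in $t$ first, and separate diagonal from off-diagonal. However, your identification of the source of the constant $c_f$ is off. In the paper, $c_f$ arises entirely from the \emph{principal} term $S_1S_3S_5$ (all three non-dual pieces): on the diagonal $m=nk$ one is left with a triple contour integral in $u,v,w$ at $\operatorname{Re}=\varepsilon$, and successive shifts cross poles at $u=0$, $v=0$, and $v=-w$ (the last from $\zeta(1+v+w)$); it is the residue at $v=-w$ that produces $\int_{(\varepsilon)}\frac{L(1-w,f)}{-w^2}e^{2w^2+i\pi w/2}\,dw$, while the residue at $v=0$ followed by a shift in $w$ produces the $\log$-term. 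The dual piece $S_2$ of the $L(s,f)$ equation contributes only to error terms, not to $c_f$.

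The more substantive divergence is in the off-diagonal. You propose Voronoi summation on the cusp-form variable to treat what you call shifted convolution sums. The paper does not use Voronoi at all. For the principal term $I_1$, the off-diagonal $m\ne nk$ is killed outright by repeated integration by parts in $t$: since $m,k\le T^{1/2+\varepsilon}$ and $n\le T^{1+\varepsilon}$, the phase derivative $T\log(m/nk)$ is $\gg T^{1/2-\varepsilon}$, and the hypothesis $\Delta\le T^{1/2-\varepsilon}$ makes this negligible with no arithmetic input. The genuinely delicate pieces are the cross-terms $I_2,I_3,I_7$ where a dual piece introduces a factor $(t/2\pi e)^{\pm it}$; here the paper applies stationary phase in $t$, which converts the integral into an additive twist $e(mk/n)$ or $e(n/km)$, and then invokes the Wilton bound $\sum_{n\le x}\lambda_f(n)e(\alpha n)\ll x^{1/2+\varepsilon}$ together with exponent pairs on the zeta side. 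Your Voronoi approach could in principle recover comparable bounds (Voronoi is one route to Wilton), but you have not correctly located the hard terms: they are these stationary-phase cross-terms, not the near-diagonal of $I_1$, and after stationary phase the structure is an additively twisted Hecke sum rather than a shifted convolution in the usual sense.
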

        \begin{rem} With the notation as above, it is not difficult to get
		\begin{equation}
			\begin{aligned}
				\int_{\mathbb{R}}V\left(\frac{t}{T}\right)L\left(\frac{1}{2}+it, f\right)\zeta\left(\frac{1}{2}-it\right)\mathrm{~d} t=cTL(1,\,f)+O(T^{\frac{1}{2}+\varepsilon}),
			\end{aligned}
		\end{equation}
	where $c=\int_{\mathbb{R}}V(\xi)\mathrm{~d} \xi$.	
	    \end{rem}
    \begin{rem}
    The same method applies for $f$ either a holomorphic or a Maass form. In fact, the case where $f$ is holomorphic is simpler, so we will present the proof for the Maass case.
    \end{rem}
	Since the test function $V$ in the statement allows oscillations, we can derive a nontrivial asymptotic formula for the integral with a sharp cut. Indeed, we have the following results:
	
	\begin{cor}\label{c3} For any $\varepsilon>0$, we have
	$$
	\begin{aligned}
		\int_{T}^{2T}L\left(\frac{1}{2}+it, f\right)\left|\zeta\left(\frac{1}{2}-it\right)\right|^{2}\mathrm{~d} t=\frac{c}{2}L(1,f)T\log T+c_{f}T+O(T^{\frac{2}{3}+\varepsilon}).
	\end{aligned}
	$$
    \end{cor}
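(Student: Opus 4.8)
The plan is to deduce Corollary~\ref{c3} from Theorem~\ref{t1} by approximating the sharp cut-off $\mathbf{1}_{[T,2T]}$ by a smooth weight of controlled oscillation and estimating the resulting transition intervals by mean-value theorems. Fix $\Delta=T^{1/3-\varepsilon}$ and set $H:=T/\Delta=T^{2/3+\varepsilon}$. I would take a smooth $V$ with $0\le V\le 1$, supported in $[1-\Delta^{-1},\,2+\Delta^{-1}]$, identically $1$ on $[1,2]$, and satisfying $V^{(i)}\ll\Delta^{i}$ for $i\ge 0$. Since $\Delta\le T^{1/2-\varepsilon}$, Theorem~\ref{t1} applies to $V$ (the proof of that theorem is insensitive to enlarging $[1,2]$ to any fixed compact subinterval of $(0,\infty)$). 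As $\int_{\mathbb{R}}V(\xi)\,\mathrm{d}\xi=1+O(\Delta^{-1})$ and all integrands entering $c_{f}$ are bounded on the support of $V$, so that the constant attached to $V$ differs from the one attached to $\mathbf{1}_{[1,2]}$ by $O(\Delta^{-1})$, Theorem~\ref{t1} gives
\[
\int_{\mathbb{R}}V\!\left(\tfrac tT\right)L\!\left(\tfrac12+it,f\right)\Big|\zeta\!\left(\tfrac12-it\right)\Big|^{2}\,\mathrm{d}t=\tfrac{c}{2}L(1,f)T\log T+c_{f}T+O\!\left(\tfrac{T\log T}{\Delta}\right)+O(T^{1/2+\varepsilon}),
\]
with $c,c_{f}$ now denoting the constants for $V=\mathbf{1}_{[1,2]}$.

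Next I would pass from $V(t/T)$ to $\mathbf{1}_{[T,2T]}(t)$. The function $V(t/T)-\mathbf{1}_{[T,2T]}(t)$ is supported on $[T-H,T]\cup[2T,2T+H]$ and bounded by $1$ there, so the error incurred is at most
\[
\sum_{J}\int_{J}\Big|L\!\left(\tfrac12+it,f\right)\Big|\,\Big|\zeta\!\left(\tfrac12+it\right)\Big|^{2}\,\mathrm{d}t\le\sum_{J}\Bigl(\int_{J}\Big|L\!\left(\tfrac12+it,f\right)\Big|^{2}\,\mathrm{d}t\Bigr)^{1/2}\Bigl(\int_{J}\Big|\zeta\!\left(\tfrac12+it\right)\Big|^{4}\,\mathrm{d}t\Bigr)^{1/2},
\]
$J$ running over the two intervals of length $H$. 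Because $H\ge T^{2/3+\varepsilon}$, Good's asymptotic \eqref{good} gives $\int_{J}|L(\tfrac12+it,f)|^{2}\,\mathrm{d}t\ll H\log T$ (its error term $(T\log T)^{2/3}$ being absorbed), and the fourth-moment asymptotic \eqref{Ivic} gives $\int_{J}|\zeta(\tfrac12+it)|^{4}\,\mathrm{d}t\ll H(\log T)^{4}$; hence the transition intervals contribute $O\!\bigl(H(\log T)^{5/2}\bigr)=O(T^{2/3+\varepsilon})$.

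Combining the two displays and absorbing $T\log T/\Delta=O(T^{2/3+\varepsilon})$ into the error yields
\[
\int_{T}^{2T}L\!\left(\tfrac12+it,f\right)\Big|\zeta\!\left(\tfrac12-it\right)\Big|^{2}\,\mathrm{d}t=\tfrac{c}{2}L(1,f)T\log T+c_{f}T+O(T^{2/3+\varepsilon}),
\]
which is the claim. The only ingredient beyond Theorem~\ref{t1} is the Cauchy--Schwarz step, and this is precisely where the exponent $2/3$ originates: the second moment of $L(\tfrac12+it,f)$ is only known to match its main term on intervals of length $\gg T^{2/3+\varepsilon}$ (the size of the error term in \eqref{good}), which forces $H\gg T^{2/3+\varepsilon}$, hence $\Delta\ll T^{1/3-\varepsilon}$, hence a main-term mismatch of size $T\log T/\Delta\gg T^{2/3}$. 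I expect the only slightly delicate point to be the verification that passing from $\mathbf{1}_{[1,2]}$ to $V$ perturbs the leading constant $c$ and the constant $c_{f}$ by merely $O(\Delta^{-1})$; the remainder of the argument is routine bookkeeping.
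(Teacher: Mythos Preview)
Your proposal is correct and follows essentially the same route as the paper: apply Theorem~\ref{t1} to a smooth $V$ close to $\mathbf{1}_{[1,2]}$, control the transition intervals via Cauchy--Schwarz combined with Good's second moment \eqref{good} and the fourth moment of $\zeta$ \eqref{Ivic}, and balance at $\Delta\asymp T^{1/3}$. The only cosmetic difference is that the paper takes $V$ supported \emph{inside} $[1,2]$ with $V\equiv 1$ on $[1+\Delta^{-1},2-\Delta^{-1}]$, so Theorem~\ref{t1} applies verbatim without any appeal to the robustness of its proof under enlarging the support; you may wish to adopt that choice to avoid the extra justification.
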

    \begin{cor}\label{c4} For any $\varepsilon>0$, we have
    	$$
    	\begin{aligned}
    		\int_{T}^{2T}L\left(\frac{1}{2}+it, f\right)\zeta\left(\frac{1}{2}-it\right)\mathrm{~d} t=cTL(1,\,f)+O(T^{\frac{7}{12}+\varepsilon}).
    	\end{aligned}
    	$$
    \end{cor}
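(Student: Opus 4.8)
\emph{Proof proposal for Corollary~\ref{c4}.}

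The plan is to deduce Corollary~\ref{c4} from the remark immediately following Theorem~\ref{t1} — which evaluates the smoothly weighted analogue of the present integral — by desmoothing the sharp cut-off, and to choose the width of the transition region so that the hypothesis $\Delta\le T^{1/2-\varepsilon}$ is only just met. Fix a small $\varepsilon>0$, set $\delta:=T^{-1/2+\varepsilon}$, and pick a smooth $V$ on $\mathbb R$ with $0\le V\le 1$, $V\equiv 1$ on $[1+\delta,2-\delta]$, $V$ supported in $[1-\delta,2+\delta]$, and $V^{(i)}\ll\delta^{-i}$ for all $i\ge 0$; a standard antisymmetric choice of the two end ramps makes $\int_{\mathbb R}V(\xi)\,d\xi=\int_1^2 d\xi=1$, so that the constant $c$ of the statement is $1$. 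Since $V^{(i)}\ll\delta^{-i}=(T^{1/2-\varepsilon})^{i}$, the remark applies with $\Delta=T^{1/2-\varepsilon}$ and gives, writing $F(t):=L(\tfrac12+it,f)\,\zeta(\tfrac12-it)$,
\begin{equation*}
\int_{\mathbb R}V\!\left(\tfrac{t}{T}\right)F(t)\,dt=cTL(1,f)+O\!\left(T^{1/2+\varepsilon}\right).
\end{equation*}

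Next I would control the difference from the sharp integral. The function $\mathbbm{1}_{[T,2T]}(t)-V(t/T)$ is bounded by $1$ and supported on $\mathcal I:=[T-\delta T,\,T+\delta T]\cup[2T-\delta T,\,2T+\delta T]$, of total length $\asymp\delta T=T^{1/2+\varepsilon}$, so Cauchy--Schwarz gives
\begin{equation*}
\left|\int_T^{2T}F(t)\,dt-\int_{\mathbb R}V\!\left(\tfrac{t}{T}\right)F(t)\,dt\right|\le\int_{\mathcal I}|F(t)|\,dt\le\left(\int_{\mathcal I}\big|L(\tfrac12+it,f)\big|^2dt\right)^{1/2}\left(\int_{\mathcal I}\big|\zeta(\tfrac12-it)\big|^2dt\right)^{1/2}.
\end{equation*}
The two mean values over $\mathcal I$ are short-interval second moments. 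For $\zeta$, the classical formula for $\int_0^X|\zeta(\tfrac12+it)|^2dt$ together with $E(X)\ll X^{1/3}$ gives $\int_{\mathcal I}|\zeta(\tfrac12-it)|^2dt\ll\delta T\log T+T^{1/3+\varepsilon}\ll T^{1/2+\varepsilon}$. For $L(\cdot,f)$, subtracting Good's asymptotic~\eqref{good} (Meurman's analogue in the Maass case) at the endpoints of the two intervals gives $\int_{\mathcal I}|L(\tfrac12+it,f)|^2dt\ll\delta T\log T+(T\log T)^{2/3}\ll T^{2/3+\varepsilon}$. Hence the displayed difference is $\ll(T^{2/3+\varepsilon})^{1/2}(T^{1/2+\varepsilon})^{1/2}=T^{7/12+\varepsilon}$, which absorbs the $O(T^{1/2+\varepsilon})$ from the smooth formula, and Corollary~\ref{c4} follows; Corollary~\ref{c3} is handled identically, with the fourth moment of $\zeta$ in place of the second.

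I expect the only genuine constraint — and the reason the exponent is $7/12$ rather than $1/2$ — to be the clash between two demands on the transition width $\delta$. The hypothesis of Theorem~\ref{t1} forbids $\delta<T^{-1/2+\varepsilon}$; and since the conductor of $L(s,f)$ grows like $t^{2}$, Good's error term has size $T^{2/3}$, so $\int_{\mathcal I}|L(\tfrac12+it,f)|^2dt$ cannot be pushed below $T^{2/3+\varepsilon}$ on an interval of length merely $T^{1/2+\varepsilon}$. Balancing these, via Cauchy--Schwarz, against the far better behaved second moment of $\zeta$ yields exactly $T^{1/3+\varepsilon}\cdot T^{1/4+\varepsilon}=T^{7/12+\varepsilon}$, with no slack; improving this would require relaxing the hypothesis of Theorem~\ref{t1} or a genuinely sharper short-interval second moment for $L(s,f)$.
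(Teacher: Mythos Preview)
Your argument is correct and is essentially the approach the paper has in mind: apply the smoothly weighted asymptotic from the Remark, bound the edge contribution by Cauchy--Schwarz and the short-interval second moments of $\zeta$ and $L(\cdot,f)$ coming from the classical second-moment formula and Good/Meurman, and push $\Delta$ to its maximal admissible size $T^{1/2-\varepsilon}$, which is exactly what produces the exponent $7/12=\tfrac14+\tfrac13$. One small technicality: as written, your $V$ is supported in $[1-\delta,2+\delta]$, whereas Theorem~\ref{t1} (and hence the Remark) assumes $\operatorname{supp}V\subset[1,2]$; either take $V$ supported in $[1,2]$ with $V\equiv1$ on $[1+\delta,2-\delta]$ as the paper does (then $c=1+O(\delta)$ and the discrepancy $O(T\delta)=O(T^{1/2+\varepsilon})$ is harmless), or note that the proof of Theorem~\ref{t1} only uses that $V$ is supported on $\xi\asymp1$.
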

	    In this paper, $\varepsilon$ is an arbitrarily small positive constant which is not necessarily
		the same at each occurrence. 
	
	\section{Preliminaries}
	\subsection{Approximate functional equations} 
	
	Let $L(s,f)$ be an $L$-function of degree $d$ in the sense of \cite{iwaniec2021analytic}. More precisely, we have
	$$
	L(s, f)=\sum_{n \geq 1} \frac{\lambda_f(n)}{n^s}=\prod_p \prod_{j=1}^d\left(1-\frac{\alpha_j(p)}{p^s}\right)^{-1}
	$$
	with $\lambda_f(1)=1, \lambda_f(n) \in \mathbb{C}, \alpha_j(p) \in \mathbb{C}$, such that the series and Euler products are absolutely convergent for $\operatorname{Re}(s)>1$. There exist an integer $q(f) \geq 1$ and a gamma factor
	
	$$
	\gamma(s, f)=\pi^{-d s / 2} \prod_{j=1}^d \Gamma\left(\frac{s-\kappa_j}{2}\right)
	$$
	with $\kappa_j \in \mathbb{C}$ and $\operatorname{Re}\left(\kappa_j\right)<1 / 2$ such that the complete $L$-function
	$$
	\Lambda(s, f)=q(f)^{s / 2} \gamma(s, f) L(s, f)
	$$
	admits an analytic continuation to a meromorphic function for $s \in \mathbb{C}$ of order 1 with poles at most at $s=0$ and $s=1$. Moreover we assume that $L(s, f)$ satisfies the functional equation
	$$
	\Lambda(s, f)=\varepsilon(f) \Lambda(1-s, \bar{f}).
	$$
	The following lemma can be derived from \cite[Theorem 5.3]{iwaniec2021analytic}. Below we simplify the above notation by not displaying the dependence on $f$ and we write $q=q(f)$. 
	\begin{lem} With notation as above, we have
		\begin{equation}\label{app}
			\begin{aligned}
				L(1/2+i t,f)=\sum_{n\geq1}\frac{\lambda_{f}(n)}{n^{1/2+it}}W_{1/2+it}\left(\frac{n}{\sqrt{q}}\right)+\varepsilon(f){q}^{1/2-s}\frac{\gamma(1/2-it,f)}{\gamma(1/2+it,f)}\sum_{n\geq1}\frac{\lambda_{f}(n)}{n^{1/2-it}}W_{1/2-it}\left(\frac{n}{\sqrt{q}}\right),
			\end{aligned}
		\end{equation}
		where for $x, c>0$, $W_{s}(x)$ is a smooth function defined by 
		$$
		\begin{aligned}
			W_{s}(x)=\frac{1}{2 \pi i}\int_{(c)}x^{-u}G(u)\frac{\gamma(s+u,f)}{\gamma(s,f)}\frac{\mathrm{~d} u}{u}
		\end{aligned}
		$$
		and $G(u)$ be any function which is holomorphic and bounded in the strip $-4<\operatorname{Re}(u)<4$, even, and normalized by $G(0)=1$. 
	\end{lem}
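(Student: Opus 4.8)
The plan is to run the classical contour‑shift derivation of an approximate functional equation, specialized to the point $s=\tfrac12+it$; this is in substance \cite[Theorem 5.3]{iwaniec2021analytic}. Write $s=\tfrac12+it$. Since $f$ is a Hecke cusp form for $SL(2,\mathbb{Z})$, the completed $L$-function $\Lambda(s,f)=q^{s/2}\gamma(s,f)L(s,f)$ is entire, $f$ is self‑dual (so $\bar f=f$, $\lambda_{\bar f}(n)=\lambda_f(n)$ and $\gamma(\,\cdot\,,\bar f)=\gamma(\,\cdot\,,f)$), and $q^{-(s+u)/2}\Lambda(s+u,f)=\gamma(s+u,f)L(s+u,f)$ is holomorphic in $u$, the trivial zeros of $L(\,\cdot\,,f)$ cancelling the poles of $\gamma(\,\cdot\,,f)$. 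Fix $c=2$, which lies in the admissible strip $-4<\operatorname{Re}(u)<4$ and makes $\operatorname{Re}(s+u)>1$ and $\operatorname{Re}(1-s+u)>1$ on the line $\operatorname{Re}(u)=c$.

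First I would consider the integral
$$
\mathcal{I}:=\frac{1}{2\pi i}\int_{(c)}L(s+u,f)\,\frac{\gamma(s+u,f)}{\gamma(s,f)}\,q^{u/2}\,G(u)\,\frac{\mathrm{d}u}{u}
=\frac{q^{-s/2}}{\gamma(s,f)}\cdot\frac{1}{2\pi i}\int_{(c)}\Lambda(s+u,f)\,G(u)\,\frac{\mathrm{d}u}{u},
$$
and evaluate it in two ways. Expanding $L(s+u,f)=\sum_{n\ge1}\lambda_f(n)n^{-s-u}$ (absolutely convergent on $\operatorname{Re}(u)=c$) and interchanging summation and integration gives $\mathcal{I}=\sum_{n\ge1}\lambda_f(n)n^{-s}W_s(n/\sqrt q)$, the first sum in \eqref{app}; the interchange, and all the contour shifts below, are justified by the exponential decay of $\Lambda(\,\cdot\,,f)$ (equivalently of the gamma factors) on vertical lines of bounded real part, $G$ being bounded there with $G(0)=1$. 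On the other hand, moving the contour in $\mathcal{I}$ to $\operatorname{Re}(u)=-c$ crosses only the simple pole at $u=0$, of residue $q^{-s/2}\gamma(s,f)^{-1}\Lambda(s,f)G(0)=L(s,f)$ (there are no other poles, $\Lambda(\,\cdot\,,f)$ being entire and $\gamma(s,f)\neq0$), so that
$$
L(s,f)=\mathcal{I}-\frac{q^{-s/2}}{\gamma(s,f)}\cdot\frac{1}{2\pi i}\int_{(-c)}\Lambda(s+u,f)\,G(u)\,\frac{\mathrm{d}u}{u}.
$$

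It then remains to transform the integral over $\operatorname{Re}(u)=-c$. Substituting $u\mapsto-u$ and using that $G$ is even replaces it by $-\frac{q^{-s/2}}{\gamma(s,f)}\cdot\frac{1}{2\pi i}\int_{(c)}\Lambda(s-u,f)\,G(u)\,\frac{\mathrm{d}u}{u}$, and the functional equation $\Lambda(s-u,f)=\varepsilon(f)\Lambda(1-s+u,\bar f)$ turns the previous display into
$$
L(s,f)=\mathcal{I}+\varepsilon(f)\,\frac{q^{-s/2}}{\gamma(s,f)}\cdot\frac{1}{2\pi i}\int_{(c)}\Lambda(1-s+u,\bar f)\,G(u)\,\frac{\mathrm{d}u}{u}.
$$
Inserting $\Lambda(1-s+u,\bar f)=q^{(1-s+u)/2}\gamma(1-s+u,\bar f)\sum_{n\ge1}\lambda_{\bar f}(n)n^{-(1-s+u)}$ (valid on $\operatorname{Re}(u)=c$) and interchanging once more, there only remains the bookkeeping of the arithmetic factors: on the critical line $1-s=\bar s=\tfrac12-it$, so $q^{-s/2}q^{(1-s+u)/2}=q^{1/2-s}q^{u/2}$ and $n^{-(1-s+u)}=n^{-(1/2-it)}n^{-u}$, whence $q^{u/2}n^{-u}=(n/\sqrt q)^{-u}$; replacing $\lambda_{\bar f}(n)$ by $\lambda_f(n)$ and $\gamma(1-s+u,\bar f)$ by $\gamma(\tfrac12-it+u,f)$ by self‑duality, and writing $\gamma(s,f)^{-1}=\frac{\gamma(1/2-it,f)}{\gamma(1/2+it,f)}\cdot\gamma(\tfrac12-it,f)^{-1}$, one recognises the inner $u$-integral as $W_{1/2-it}(n/\sqrt q)$ and the prefactor as $\varepsilon(f)q^{1/2-s}\frac{\gamma(1/2-it,f)}{\gamma(1/2+it,f)}$. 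Together with $\mathcal{I}=\sum_n\lambda_f(n)n^{-s}W_s(n/\sqrt q)$ this is precisely \eqref{app}.

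I do not expect a genuine obstacle: the content is the standard unfolding argument, and the only points requiring a little care are the justification of the term‑by‑term integration and contour shifts (which rest on the exponential decay of $\Lambda(\,\cdot\,,f)$ in vertical strips of bounded width, an admissible choice being $G(u)=e^{u^2}$), the fact that $\Lambda(\,\cdot\,,f)$ is entire so that the shift contributes only the residue at $u=0$, and the self‑duality of $f$, which is what allows both sums in \eqref{app} to be written with the same coefficients $\lambda_f(n)$ and the same gamma factor.
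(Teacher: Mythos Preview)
Your argument is correct and is precisely the standard contour-shift derivation of \cite[Theorem~5.3]{iwaniec2021analytic}; the paper itself does not supply a proof but simply cites that reference, so there is nothing to compare at the level of method.

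One small caveat: you carry out the argument under the hypotheses that $\Lambda(\,\cdot\,,f)$ is entire and that $f$ is self-dual. Both hold for Hecke cusp forms on $SL(2,\mathbb{Z})$, so your sketch covers the intended application, but the lemma as stated in the paper is phrased for a general degree-$d$ $L$-function in the Iwaniec--Kowalski framework, where $\Lambda$ may have simple poles at $s=0,1$ (as for $\zeta$, to which the paper also applies the lemma) and where the dual sum would carry $\overline{\lambda_f(n)}$ rather than $\lambda_f(n)$. In the general case the contour shift picks up additional residues from those poles, which one kills by choosing $G$ to vanish at the relevant points (e.g.\ inserting a polynomial factor in $u$); the paper's statement already writes $\lambda_f(n)$ in both sums, so self-duality is tacitly assumed there as well. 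If you want your proof to match the full generality of the stated lemma, a sentence flagging how to handle possible poles of $\Lambda$ would suffice.
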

    \begin{rem}
    We have that $W_{s}(n)\ll\left(\frac{n}{t^{d/2}}\right)^{-A}$ for any $A>0$, thus the sums in \eqref{app} are essentially supported on $n<T^{d/2+\varepsilon}$.
    \end{rem}
	\begin{lem}\label{2} For $|t|\in[T,2T]$, we have
		\begin{equation}\label{huangapp}
			\begin{aligned}
				L(1/2+i t, f)&=\frac{1}{2 \pi i} \int_{\varepsilon-i T^{\varepsilon}}^{\varepsilon+i T^{\varepsilon}} \sum_{\substack{N \leq T^{d/2+\varepsilon} \\
						N\,d y a d i c}} \sum_{n \geq 1} \frac{\lambda_f(n)}{n^{1 / 2+i t+w}} V_1\left(\frac{n}{N}\right)\left(\frac{ t}{2\pi}\right)^{dw/2} \times e^{i\operatorname{sgn}(t) \pi dw / 4}q^{w/2} \frac{G(w)}{w} \mathrm{~d} w \\
				& +\frac{\varepsilon(f)}{2 \pi i} \int_{\varepsilon-i T^\varepsilon}^{\varepsilon+i T^\varepsilon} \sum_{\substack{N \leq T^{d/2+\varepsilon} \\
						N\,d y a d i c}} \sum_{n \geq 1} \frac{\lambda_f(n)}{n^{1 / 2-i t+w}} V_1\left(\frac{n}{N}\right)\left(\frac{t}{2 \pi e}\right)^{-dit}\left(\frac{t}{2 \pi}\right)^{dw/2} \\
				& \times e^{i\pi\operatorname{sgn}(t)(-  dw / 4+ d/4+\sum_{j}\kappa_{j}/2)}q^{w/2-it} \frac{G(w)}{w} \mathrm{~d} w+O\left(T^{d/4-1+\varepsilon}\right), \\
			\end{aligned}
		\end{equation}
		where $V_1$ is a fixed smooth function with $\operatorname{supp}V\subset[1,2]$. 
	\end{lem}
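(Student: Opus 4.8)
The plan is to derive \eqref{huangapp} from the approximate functional equation \eqref{app}. First I would insert a smooth dyadic partition of unity $1=\sum_{N\,\mathrm{dyadic}}V_1(n/N)$ (for $n\ge 1$) into each of the two sums in \eqref{app}, so that $L(1/2+it,f)$ becomes a sum over dyadic $N$ of blocks $\sum_n \lambda_f(n)n^{-1/2\mp it}V_1(n/N)W_{1/2\pm it}(n/\sqrt q)$, the dual block carrying the prefactor $\varepsilon(f)q^{1/2-s}\gamma(1/2-it,f)/\gamma(1/2+it,f)$. By the rapid decay of $W_{1/2\pm it}$ noted in the remark following \eqref{app}, namely $W_{1/2\pm it}(x)\ll(x/|t|^{d/2})^{-A}$, every block with $N\gg T^{d/2+\varepsilon}$ contributes $O(T^{-100})$, so I may truncate to $N\le T^{d/2+\varepsilon}$; only $O(\log T)$ values of $N$ survive, to be absorbed into $T^{\varepsilon}$.

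For a fixed block with $N\le T^{d/2+\varepsilon}$ I would substitute the contour integral defining $W_{1/2+it}(n/\sqrt q)$, taken at $\operatorname{Re}(w)=\varepsilon$, interchange the now finite $n$-sum with the $w$-integral, and write $(n/\sqrt q)^{-w}=n^{-w}q^{w/2}$. This reproduces the first line of \eqref{huangapp} except that $\gamma(1/2+it+w,f)/\gamma(1/2+it,f)$ stands in place of $(t/2\pi)^{dw/2}e^{i\operatorname{sgn}(t)\pi dw/4}$. Applying Stirling's formula to each of the $d$ gamma factors, uniformly for $|\operatorname{Im}(w)|\le T^{\varepsilon}$ and $|t|\in[T,2T]$, gives
$$
\frac{\gamma(1/2+it+w,f)}{\gamma(1/2+it,f)}=\Big(\frac{|t|}{2\pi}\Big)^{dw/2}e^{i\operatorname{sgn}(t)\pi dw/4}\big(1+O(T^{\varepsilon-1})\big).
$$
Discarding the error factor costs, after summing $\sum_{n\asymp N}|\lambda_f(n)|n^{-1/2}\ll N^{1/2+\varepsilon}$ over the $O(\log T)$ blocks with $N\le T^{d/2+\varepsilon}$, a total of $\ll T^{\varepsilon-1}\cdot T^{d/4+\varepsilon}=T^{d/4-1+\varepsilon}$; and truncating the $w$-integral from $\operatorname{Re}(w)=\varepsilon$ to $|\operatorname{Im}(w)|\le T^{\varepsilon}$ costs only $O(T^{-100})$, using the decay of $G$ on vertical lines (the crossover in $e^{-v^2}\cdot e^{c|v|}$ occurs at $|v|\asymp 1$, so the tail beyond $T^{\varepsilon}$ is exponentially small). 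This produces the first line of \eqref{huangapp}.

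The dual block is treated in the same way; the only new point is the asymptotic evaluation of its prefactor. Since $s=1/2+it$ gives $q^{1/2-s}=q^{-it}$, combining $\varepsilon(f)q^{-it}\gamma(1/2-it,f)/\gamma(1/2+it,f)$ with the factor $q^{w/2}\gamma(1/2-it+w,f)/\gamma(1/2-it,f)$ coming from $W_{1/2-it}$ leaves $\varepsilon(f)q^{w/2-it}\gamma(1/2-it+w,f)/\gamma(1/2+it,f)$. A second application of Stirling, now to a ratio whose numerator and denominator have gamma arguments with imaginary parts of opposite sign, yields — after collecting the $\pi^{\pm dit}$, the $t\log t$-terms, and the constant $i\pi/2$-phases — exactly $(t/2\pi e)^{-dit}(t/2\pi)^{dw/2}e^{i\pi\operatorname{sgn}(t)(-dw/4+d/4+\sum_j\kappa_j/2)}q^{w/2-it}$, as in the second line of \eqref{huangapp}, up again to a relative error $O(T^{\varepsilon-1})$ contributing $\ll T^{d/4-1+\varepsilon}$; the term $\sum_j\kappa_j/2$ is the contribution of the shifts $\kappa_j$ to those phases. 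Adding the two lines and the two error contributions gives \eqref{huangapp}.

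The step I expect to be the main obstacle is this last one: the bookkeeping of the constant phases in the dual term. One must carry the Stirling expansion to sufficient precision, be consistent about the branch of the logarithm (which is the source of $\operatorname{sgn}(t)$), and check that the $\kappa_j$-dependence assembles precisely into $\sum_j\kappa_j/2$ and not some other combination. No individual estimate is difficult, but the constants must be tracked without error; everything else is routine.
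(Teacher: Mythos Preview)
Your proposal is correct and follows essentially the same route as the paper's proof: start from the approximate functional equation \eqref{app}, truncate the $n$-sums at $T^{d/2+\varepsilon}$ using the decay of $W_s$, truncate the $w$-integral at $|\operatorname{Im}(w)|\le T^\varepsilon$ using $G(w)=e^{w^2}$, replace the gamma ratios by their Stirling asymptotics, and insert a smooth dyadic partition of unity. The only difference is the order in which you perform these steps (you partition first, the paper partitions last), which is immaterial; your identification of the dual-term phase bookkeeping as the one place requiring care matches the paper's treatment via the explicit formulas \eqref{stir1}--\eqref{stir2}.
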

	\begin{proof} The calculation is the same as Huang \cite[Lemma 11]{huang2021rankin}, but the results are slightly different. As $|t| \rightarrow \infty$, Stirling's formula gives
		$$
		\Gamma(\sigma+i t)=\sqrt{2 \pi}|t|^{\sigma-\frac{1}{2}+i t} \exp \left(-\frac{\pi}{2}|t|-i t+i \operatorname{sgn}(t) \frac{\pi}{2}\left(\sigma-\frac{1}{2}\right)\right)\left(1+O\left(|t|^{-1}\right)\right) \text {. }
		$$
		
		\noindent Hence for $|t| \in[T, 2 T]$ with $T$ large, and $\varepsilon \leq \operatorname{Re}(w) \ll 1,|\operatorname{Im}(w)| \ll T^{\varepsilon}$, and $\kappa_j \ll 1$, we have
		\begin{equation}\label{stir1}
			\frac{\Gamma\left(\frac{1 / 2+i t+w-\kappa_j}{2}\right)}{\Gamma\left(\frac{1 / 2+i t-\kappa_j}{2}\right)}=\left(\frac{|t|}{2}\right)^{w / 2} e^{i\operatorname{sgn}(t)\pi w / 4}\left(1+O\left(T^{ \varepsilon-1}\right)\right),
		\end{equation}
		\begin{equation}\label{stir2}
			\frac{\Gamma\left(\frac{1 / 2-i t-\kappa_j}{2}\right)}{\Gamma\left(\frac{1 / 2+i t-\kappa_j}{2}\right)}=\left(\frac{|t|}{2 e}\right)^{-i t}e^{i\pi \operatorname{sgn}(t)(1/4+\kappa_{j}/2
				)}\left(1+O\left(T^{-1}\right)\right) .
		\end{equation}
		
		\noindent For $|t| \in[T, 2 T]$, by the approximate functional equation \eqref{app} we have
		$$
		\begin{aligned}
			L(1 / 2+i t, f)= & \frac{1}{2 \pi i} \sum_{n \geq 1} \frac{\lambda_f(n)}{n^{1 / 2+i t}}  \int_{(2)} \frac{\gamma(1 / 2+i t+w, f)}{\gamma(1 / 2+i t, f)} \frac{q^{w/2}}{n^w} \frac{G(w)}{w} \mathrm{~d} w \\
			& +\frac{\varepsilon(f)q^{-it}}{2 \pi i} \sum_{n \geq 1} \frac{\overline{\lambda_f(n)}}{n^{1 / 2-i t}}  \int_{(2)} \frac{\gamma( 1 / 2-i t+w,f)}{\gamma( 1 / 2+i t,f)} \frac{q^{w/2}}{n^w} \frac{G(w)}{w} \mathrm{~d} w \\
			& +O\left(T^{-A}\right),
		\end{aligned}
		$$
		
		\noindent where $G(w)=e^{w^2}$. Shifting the lines of integration to $\operatorname{Re}(w)=\varepsilon$, no poles are encountered in this shifting. By \cite[Proposition 5.4]{iwaniec2021analytic} we have
		$$
		\begin{aligned}
			\sum_{n>T^{d/2+\varepsilon}}\frac{\lambda_{f}(n)}{n^{1/2+it}}W_{1/2+it}(n)\ll\sum_{n>T^{d/2+\varepsilon}}\left|\frac{\lambda_{f}(n)}{n^{1/2+it  }}\right|\left(\frac{n}{t^{d/2}}\right)^{-A}\ll T^{-A}.
		\end{aligned}
		$$
		Hence  we can truncate the $n$-sum at $n \leq T^{d/2+\varepsilon}$ for the first sum and at $n \leq T^{d/2+\varepsilon}$ for the second sum above with a negligible error. By a simple calculation we have
		$$
		\begin{aligned}
			\sum_{n\leq T^{d/2+\varepsilon}}\frac{\lambda_{f}(n)}{n^{1/2+it+\varepsilon}}\int_{\varepsilon+i T^{\varepsilon}}^{\varepsilon+i\infty}\frac{\gamma(1 / 2+i t+w, f)}{\gamma(1 / 2+i t, f)} \frac{1}{n^w} \frac{G(w)}{w} \mathrm{~d} w\ll \int_{ T^{\varepsilon}}^{\infty}e^{-u^{2}}\mathrm{~d}u\ll e^{-T^{2\varepsilon}}.
		\end{aligned}
		$$
		Hence we can truncate at $|\operatorname{Im}(w)| \leq T^{\varepsilon}$ with a negligible error term. By Stirling's formula \eqref{stir1} and \eqref{stir2}, and then by a smooth partition of unity, we proved \eqref{huangapp}.
	\end{proof}
    
    In this paper we give the proof of Theorem \ref{t1} for even Hecke-Maass forms, the details for odd forms being entirely similar. Thus throughtout, $f$ will denote an even Hecke-Maass cusp form for the full modular group with Laplacian eigenvalue $\frac{1}{4}+T_{f}^{2}$, where $T_{f}$ is real. Let $\lambda_{f}(n)$ denote the eigenvalue of the $n$-th Hecke operator corresponding to $f$. For the even Hecke-Maass cusp form, we define the $L$-function
    \begin{equation*}
    	L(s,f)=\sum_{n \geq 1} \lambda_f(n)n^{-s}.
    \end{equation*}
    The above function means that $d=2$, $\lambda_{f}(n)=\overline{\lambda_{f}(n)}$ and $q=\varepsilon(f)=1$.\\

    Let the $L$-function in \eqref{huangapp} be $\zeta(s)$, and for $t\in[T,2T]$ we have that
    \begin{equation}\label{zetaapp1}
    	\begin{aligned}
    		\zeta(1 / 2+i t)&=\frac{1}{2 \pi i} \int_{\varepsilon-i T^{\varepsilon}}^{\varepsilon+i T^{\varepsilon}} \sum_{\substack{K \leq T^{1/ 2+\varepsilon} \\
    				K\,d y a d i c}} \sum_{k \geq 1} \frac{1}{k^{1 / 2+i t+w}} V_1\left(\frac{k}{K}\right)\left(\frac{ t}{2\pi}\right)^{ w / 2} \times e^{i \pi  w / 4} \frac{G(w)}{w} \mathrm{~d} w \\
    		& +\frac{1}{2 \pi i} \int_{\varepsilon-i T^\varepsilon}^{\varepsilon+i T^\varepsilon} \sum_{\substack{K \leq T^{1/ 2+\varepsilon} \\
    				K\,d y a d i c}} \sum_{k \geq 1} \frac{1}{k^{1 / 2-i t+w}} V_1\left(\frac{k}{K}\right)\left(\frac{t}{2 \pi e}\right)^{-i t}\left(\frac{t}{2 \pi}\right)^{ w / 2} \\
    		& \times e^{-i \pi  (w / 4-1/4)}  \frac{G(w)}{w} \mathrm{~d} w+O\left(T^{-3/ 4+\varepsilon}\right),\\
    	\end{aligned}
    \end{equation}
    and then
    \begin{equation}\label{zetaapp2}
    	\begin{aligned}
    		\zeta(1 / 2-i t)&=\frac{1}{2 \pi i} \int_{\varepsilon-i T^{\varepsilon}}^{\varepsilon+i T^{\varepsilon}} \sum_{\substack{M \leq T^{1/ 2+\varepsilon} \\
    				M\,d y a d i c}} \sum_{m \geq 1} \frac{1}{m^{1 / 2-i t+w}} V_1\left(\frac{m}{M}\right)\left(\frac{ t}{2\pi}\right)^{ w / 2} \times e^{- i \pi  w / 4} \frac{G(w)}{w} \mathrm{~d} w \\
    		& +\frac{1}{2 \pi i} \int_{\varepsilon-i T^\varepsilon}^{\varepsilon+i T^\varepsilon} \sum_{\substack{M \leq T^{1/ 2+\varepsilon} \\
    				M\,d y a d i c}} \sum_{m \geq 1} \frac{1}{m^{1 / 2+i t+w}} V_1\left(\frac{m}{M}\right)\left(\frac{t}{2 \pi e}\right)^{i t}\left(\frac{t}{2 \pi}\right)^{ w / 2} \\
    		& \times e^{i \pi  (w / 4-1/4)}  \frac{G(w)}{w} \mathrm{~d} w+O\left(T^{-3/ 4+\varepsilon}\right).\\
    	\end{aligned}
    \end{equation}
  
	\subsection{Sums of Fourier coefficients}
	The Ramanujan conjecture for the Fourier coefficients of $f$ is known on average. By the Rankin-Selberg theory, we have
	\begin{lem}\label{3} 
		$$
		\begin{aligned}
			\sum_{n\leq x}|\lambda_{f}(n)|^{2}\ll x .
		\end{aligned}
		$$
	\end{lem}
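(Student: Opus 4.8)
The plan is to deduce this from the analytic properties of the Rankin--Selberg convolution attached to $f$. Writing $\lambda_f(n)\in\mathbb{R}$ and using the Hecke multiplicativity of the $\lambda_f(n)$, one has the standard identity
$$D(s):=\sum_{n\geq1}\frac{|\lambda_f(n)|^2}{n^s}=\frac{L(s,f\times f)}{\zeta(2s)}=\frac{\zeta(s)\,L(s,\operatorname{sym}^2 f)}{\zeta(2s)},$$
valid for $\operatorname{Re}(s)>1$. By the Rankin--Selberg method (unfolding $|f|^2$ against a real-analytic Eisenstein series), $L(s,f\times f)$ continues to a meromorphic function of finite order whose only pole in $\operatorname{Re}(s)\geq1$ is a simple pole at $s=1$; equivalently $L(s,\operatorname{sym}^2 f)$ is entire of finite order, so that $D(s)$ is holomorphic on $\operatorname{Re}(s)>\tfrac12$ apart from a simple pole at $s=1$ with residue $L(1,\operatorname{sym}^2 f)/\zeta(2)>0$.

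Given this, I would run a contour-shift argument. Apply a truncated Perron formula to write
$$\sum_{n\leq x}|\lambda_f(n)|^2=\frac{1}{2\pi i}\int_{1+\varepsilon-iU}^{1+\varepsilon+iU}D(s)\frac{x^s}{s}\,\mathrm{d}s+O\!\left(\frac{x^{1+\varepsilon}}{U}\right),$$
for a suitable parameter $U$, then move the line of integration to $\operatorname{Re}(s)=\tfrac12+\delta$. The simple pole at $s=1$ contributes the main term $\frac{L(1,\operatorname{sym}^2 f)}{\zeta(2)}\,x\ll x$, while the shifted vertical integral and the two horizontal segments are estimated using the polynomial growth of $D(s)$ in vertical strips (which follows from the convexity bounds for $\zeta$ and for $L(s,\operatorname{sym}^2 f)$ together with their functional equations); optimizing in $U$ yields $\sum_{n\leq x}|\lambda_f(n)|^2=c_f x+O(x^{1/2+\varepsilon})$, which in particular gives the stated bound $\ll x$.

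In fact, for the statement as phrased only the qualitative input is needed: since the coefficients $|\lambda_f(n)|^2$ are non-negative and $D(s)$ is holomorphic on $\operatorname{Re}(s)=1$ except for a single simple pole at $s=1$, a classical Tauberian theorem of Landau/Wiener--Ikehara type already gives $\sum_{n\leq x}|\lambda_f(n)|^2\sim c_f x$, hence $\ll x$. Thus the only genuine ingredient is the simple pole at $s=1$ of the Rankin--Selberg $L$-function, which is classical (for holomorphic $f$ this is the original theorem of Rankin and Selberg, and the Maass case is entirely analogous); there is no real obstacle here beyond invoking that standard fact.
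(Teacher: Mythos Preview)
Your argument is correct and is precisely the Rankin--Selberg approach the paper invokes; in fact the paper does not supply a proof at all but simply states the lemma with the preface ``By the Rankin--Selberg theory, we have'', so your write-up is an expanded version of exactly what the author has in mind. There is nothing to add.
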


    \noindent The following result (see \cite[Theorem 8.1]{iwaniec2021spectral}) shows that Fourier coefficients are orthogonal to additive characters on average.
	\begin{lem}\label{5} For any real number $\alpha$ and $\varepsilon>0$, we have
		$$
		\begin{aligned}
			\sum_{n\leq x}\lambda_{f}(n)e(\alpha n)\ll x^{1/2+\varepsilon}.
		\end{aligned}
		$$
	\end{lem}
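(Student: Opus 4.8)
The estimate is a Wilton-type bound for the Hecke eigenvalues of a $GL(2)$ cusp form, and it is exactly \cite[Theorem 8.1]{iwaniec2021spectral}; the plan below reconstructs the argument. The starting point is the additive-twist Dirichlet series
$$
D_f(s,\alpha)=\sum_{n\geq 1}\frac{\lambda_f(n)\,e(\alpha n)}{n^{s}},
$$
which converges absolutely for $\operatorname{Re}(s)>1$ and, because $f$ is a cusp form, extends to an entire function; cuspidality is used precisely to kill any main term in what follows. By a smooth dyadic decomposition and Mellin inversion it suffices to prove, for a fixed smooth $\phi$ supported in $[1,2]$ and every dyadic $N\leq x$,
$$
\Sigma(N,\alpha):=\sum_{n\geq1}\lambda_f(n)\,e(\alpha n)\,\phi\!\left(\tfrac nN\right)\ll_{f,\varepsilon}N^{1/2+\varepsilon}
$$
uniformly in $\alpha$; a routine smoothing argument then recovers the bound for the sharp sum $\sum_{n\le x}\lambda_f(n)e(\alpha n)$, the mean-value bound $\sum_{m\le M}|\lambda_f(m)|\ll M$ (immediate from Lemma \ref{3} and Cauchy--Schwarz) being used to control the transition near $n=x$.

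To bound $\Sigma(N,\alpha)$ I would first choose, by Dirichlet's theorem, $a/q$ with $(a,q)=1$, $q\le N$ and $\alpha=a/q+\beta$, $|\beta|\le(qN)^{-1}$; since $|\beta|N\le 1$, the factor $e(\beta n)$ is slowly varying on the support of $\phi(n/N)$ and is absorbed into the weight. Applying the Voronoi summation formula for the Hecke--Maass form $f$ to the resulting sum produces no main term and gives
$$
\Sigma(N,\alpha)=\frac Nq\sum_{\pm}\sum_{m\geq1}\lambda_f(m)\,e\!\left(\tfrac{\mp\bar a m}{q}\right)\Psi^{\pm}\!\left(\tfrac{mN}{q^{2}}\right),
$$
where the Hankel-type transform $\Psi^{\pm}$ of the weight is $\ll_f N^{\varepsilon}$ and becomes negligible once its argument exceeds a fixed power of $T_f$, so the dual variable is essentially confined to $m\ll_f q^{2}N^{\varepsilon}/N$. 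For $q\le N^{1/2}$ the trivial bound together with $\sum_{m\le M}|\lambda_f(m)|\ll M$ already yields $\Sigma(N,\alpha)\ll_f qN^{\varepsilon}\ll N^{1/2+\varepsilon}$, as needed.

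The main obstacle is the uniformity in $\alpha$, i.e.\ the range of large denominators $N^{1/2}<q\le N$: there the dual sum has length $\asymp q^{2}/N$, the trivial bound no longer suffices, and one must exploit the oscillation of $\Psi^{\pm}$, whose leading factor is of the shape $e(c\sqrt{mN}/q)$. Thus $\Sigma(N,\alpha)$ is controlled by an exponential sum $\sum_{m}\lambda_f(m)\,e(\mp\bar a m/q)\,e(c\sqrt{mN}/q)$ with a square-root phase, and bounding this by $O(N^{1/2+\varepsilon})$ is the technical heart of the matter. This can be carried out either by iterating the Voronoi summation formula (a van der Corput $B$-process) and invoking Lemma \ref{3} once the oscillation has been straightened out, or, as in \cite{iwaniec2021spectral}, by returning to the automorphic picture: one uses that a Hecke--Maass cusp form for $SL(2,\mathbb Z)$ is bounded on the whole upper half-plane, $\|f\|_\infty\ll_f 1$, and writes the partial sum as an integral transform of $f(\alpha+iy)$ against a $K$-Bessel kernel of order $iT_f$, the decisive contribution coming from the scale $y\asymp1/N$. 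In that second approach the delicate step is to design the test kernel so that the attendant $K$-Bessel Mellin transform grows only polynomially in the vertical variable; it is there that the uniform sup-norm bound for $f$ — and not merely a convexity bound for $D_f(s,\alpha)$ — is what forces the estimate to hold uniformly in $\alpha$.
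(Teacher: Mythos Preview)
Your proposal is correct and aligns with the paper's treatment: the paper does not give a proof but simply cites \cite[Theorem~8.1]{iwaniec2021spectral}, and you invoke the very same reference while additionally providing a faithful sketch of the argument behind it. Nothing further is needed.
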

	\subsection{Estimation of integrals}
	According to \cite{kiral2019oscillatory}. Let $\mathcal{F}$ be an index set and $X=X_T: \mathcal{F} \rightarrow \mathbb{R}_{\geq 1}$ be a function of $T \in \mathcal{F}$. A family of $\left\{w_T\right\}_{T \in \mathcal{F}}$ of smooth functions supported on a product of dyadic intervals in $\mathbb{R}_{>0}^d$ is called $X$-inert if for each $j=\left(j_1, \ldots, j_d\right) \in \mathbb{Z}_{\geq 0}^d$ we have
	
	$$
	\sup _{T \in \mathcal{F}} \sup _{\left(x_1, \ldots, x_d\right) \in \mathbb{R}_{>0}^d} X_T^{-j_1-\cdots-j_d}\left|x_1^{j_1} \cdots x_d^{j_d} w_T^{\left(j_1, \ldots, j_d\right)}\left(x_1, \ldots, x_d\right)\right| \ll_{j_1, \ldots, j_d} 1
	$$
	We will use the following stationary phase lemma several times.
	
	\begin{lem}\label{6} Suppose $w=w_T$ is a family of $X$-inert in $\xi$ with compact support on $[Z, 2 Z]$, so that $w^{(j)}(\xi) \ll(Z / X)^{-j}$. Suppose that on the support of $w, h=h_T$ is smooth and satisfies that $h^{(j)}(\xi) \ll \frac{Y}{Z}$, for all $j \geq 0$. Let
		
		$$
		I=\int_{\mathbb{R}} w(\xi) e^{i h(\xi)} \mathrm{d} \xi.
		$$
		(i) If $h^{\prime}(\xi) \gg \frac{Y}{Z}$ for all $\xi \in \operatorname{supp} w$. Suppose $Y / X \geq 1$. Then $I \ll_A Z(Y / X)^{-A}$ for $A$ arbitrarily large.\\
		(ii) If $h^{\prime \prime}(\xi) \gg \frac{Y}{Z^2}$ for all $\xi \in \operatorname{supp} w$, and there exists $\xi_0 \in \mathbb{R}$ such that $h^{\prime}\left(\xi_0\right)=0$. Suppose that $Y / X^2 \geq 1$. Then we have
		
		$$
		I=\frac{e^{i h\left(\xi_0\right)}}{\sqrt{h^{\prime \prime}\left(\xi_0\right)}} W_T\left(\xi_0\right)+O_A\left(Z\left(Y / X^2\right)^{-A}\right)
		$$
		for any $A>0$, for some $X$-inert family of functions $W_T$ (depending on $A$ ) supported on $\xi_0 \asymp Z$.
	\end{lem}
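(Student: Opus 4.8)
The statement is the standard stationary-phase lemma in the style of \cite{kiral2019oscillatory}, so the plan is simply to reproduce that argument; I outline the two cases.

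For part (i) I would argue by repeated integration by parts (the non-stationary phase principle). Since $w$ has compact support in $[Z,2Z]$ and $h'$ does not vanish there, one may write $e^{ih(\xi)}=(ih'(\xi))^{-1}\frac{\mathrm{d}}{\mathrm{d}\xi}e^{ih(\xi)}$ and integrate by parts to get $I=-\int_{\mathbb{R}}\frac{\mathrm{d}}{\mathrm{d}\xi}\!\left(\frac{w(\xi)}{ih'(\xi)}\right)e^{ih(\xi)}\,\mathrm{d}\xi$, with no boundary contribution. The key bookkeeping point is that one differentiation of $w$ costs $(Z/X)^{-1}$ while the factor $(h')^{-1}$ contributes size $Z/Y$, for a net gain of $X/Y$; differentiating $(h')^{-1}$ produces $h''/(h')^{2}$, which by the assumed scaling of the derivatives of $h$ is again $\ll X/Y$; and in either case the new amplitude is still inert at scale $Z/X$ and supported on $[Z,2Z]$. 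Iterating $A$ times and estimating trivially (the phase has modulus one, the interval has length $\asymp Z$) yields $I\ll_A Z(X/Y)^{A}=Z(Y/X)^{-A}$, which is exactly the bound claimed when $Y/X\ge 1$.

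For part (ii), note first that $h'$ is strictly monotone on $\operatorname{supp}w$ because $h''\gg Y/Z^{2}>0$, so the stationary point $\xi_0$ with $h'(\xi_0)=0$ is unique; and we may assume $\xi_0\asymp Z$, for otherwise $|h'|\gg Y/Z$ on $\operatorname{supp}w$ and part (i) already shows $I$ is negligible. I would then substitute $\xi=\xi_0+(Z^{2}/Y)^{1/2}v$ to normalize the quadratic term, Taylor-expand $h$ about $\xi_0$ so the phase becomes $h(\xi_0)+\tfrac12 h''(\xi_0)(\xi-\xi_0)^{2}$ plus a lower-order inert remainder, and isolate the leading Gaussian integral $\int_{\mathbb{R}}e^{\frac{i}{2}h''(\xi_0)(\xi-\xi_0)^{2}}w(\xi)\,\mathrm{d}\xi$. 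Evaluating this by the classical Fresnel formula produces $(2\pi)^{1/2}e^{i\pi/4}(h''(\xi_0))^{-1/2}w(\xi_0)$; this constant, together with all the Taylor-correction terms, is what gets repackaged into the $X$-inert family $W_T$ (depending on $A$) supported on $\xi_0\asymp Z$. The remaining piece of the $v$-integral, where the quadratic phase is large, is bounded once more by part (i), and it is precisely here that the hypothesis $Y/X^{2}\ge 1$ enters, giving the error $O_A(Z(Y/X^{2})^{-A})$.

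I expect the main obstacle to be exactly this bookkeeping: checking that after each integration by parts, and after the rescaling and Taylor expansion in part (ii), the resulting amplitude remains $X$-inert with the stated support, so that all estimates hold uniformly over the index family $\mathcal{F}$. This is routine but delicate, and is the reason one normally just invokes \cite{kiral2019oscillatory} rather than repeating the details.
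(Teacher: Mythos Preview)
Your outline is correct and follows the standard stationary-phase argument of \cite{kiral2019oscillatory}. The paper itself does not give a proof of this lemma at all; it simply writes ``See \cite[Lemma 3.1]{kiral2019oscillatory},'' so your sketch already goes further than the paper does.
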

\begin{proof}
	See \cite[Lemma 3.1]{kiral2019oscillatory}.
\end{proof}
	\begin{lem}\label{7} For any $T>0$ and any sequence of complex numbers $a_n$, we have 
		$$
		\begin{aligned}
			\int_0^T{\left|\sum_{n \leq N} a_n n^{i t}\right|} ^2 \mathrm{~d} t \ll(T+N) \sum_{n \leq N}\left|a_n\right|^2.
		\end{aligned}
		$$
	\end{lem}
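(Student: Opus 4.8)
The plan is to expand the square, peel off the diagonal, and control the resulting off-diagonal bilinear form by the sharpened form of Hilbert's inequality due to Montgomery and Vaughan. First I would write
$$
\int_0^T\Bigl|\sum_{n\le N}a_n n^{it}\Bigr|^2\mathrm{d}t=\sum_{m,n\le N}a_m\overline{a_n}\int_0^T\Bigl(\frac mn\Bigr)^{it}\mathrm{d}t .
$$
The diagonal terms $m=n$ contribute exactly $T\sum_{n\le N}|a_n|^2$, which is the claimed main term. For $m\ne n$ one evaluates
$$
\int_0^T\Bigl(\frac mn\Bigr)^{it}\mathrm{d}t=\frac{e^{iT(\log m-\log n)}-1}{i(\log m-\log n)},
$$
so that the off-diagonal contribution equals
$$
\sum_{\substack{m,n\le N\\ m\ne n}}\frac{(a_m e^{iT\log m})\,\overline{(a_n e^{iT\log n})}}{i(\log m-\log n)}-\sum_{\substack{m,n\le N\\ m\ne n}}\frac{a_m\overline{a_n}}{i(\log m-\log n)} .
$$

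Each of these two sums is now of the shape $\sum_{m\ne n}x_m\overline{x_n}/(\lambda_m-\lambda_n)$ with $\lambda_n=\log n$, so I would invoke the Montgomery--Vaughan refinement of Hilbert's inequality, namely
$$
\Bigl|\sum_{m\ne n}\frac{x_m\overline{x_n}}{\lambda_m-\lambda_n}\Bigr|\ll\sum_{n\le N}\frac{|x_n|^2}{\delta_n},\qquad \delta_n:=\min_{m\ne n}|\lambda_m-\lambda_n| .
$$
With $\lambda_n=\log n$ the nearest gap is $\delta_n=\log(1+1/n)\ge 1/(n+1)$ for $n\ge 1$, hence $\delta_n^{-1}\ll n\le N$. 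Applying the inequality once with $x_n=a_n e^{iT\log n}$ and once with $x_n=a_n$ — in both cases $|x_n|=|a_n|$ — bounds the off-diagonal by $\ll N\sum_{n\le N}|a_n|^2$. Adding the diagonal term yields $\ll (T+N)\sum_{n\le N}|a_n|^2$, as desired.

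The only genuine point here is obtaining the \emph{clean}, logarithm-free saving $N$ rather than $N\log N$ for the off-diagonal: estimating $\sum_{m\ne n}|\log(m/n)|^{-1}$ crudely loses a factor $\log N$, and it is precisely the appearance of the \emph{local} spacing $\delta_n$ in Hilbert's inequality (so that the bound sees $\delta_n^{-1}\ll n$, not $\max_n\delta_n^{-1}\ll N$ summed trivially) that removes the loss. Everything else — the evaluation of the elementary integral and the splitting of the oscillatory factor — is routine.
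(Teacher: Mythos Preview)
Your argument is correct and is exactly the standard proof via the Montgomery--Vaughan generalized Hilbert inequality. The paper itself does not give a proof but simply refers to \cite[Theorem~9.1]{iwaniec2021analytic}, where precisely this argument is carried out.
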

	\begin{proof}
	See \cite[Theorem 9.1]{iwaniec2021analytic}. 
	\end{proof}
	\section{Proof of Theorem \ref{t1}}
	
	Applying the approximate functional equations \eqref{huangapp}, \eqref{zetaapp1} and \eqref{zetaapp2}, we have that the left hand side of \eqref{eqt1} equals 
	\begin{align}\label{eight}
	&\frac{1}{(2 \pi i)^3}\int_{\mathbb{R}}V\left(\frac{t}{T}\right)\Bigg(\int_{\varepsilon-i T^{\varepsilon}}^{\varepsilon+i T^{\varepsilon}} \sum_{\substack{N \leq T^{1+\varepsilon} \\
			N\,d y a d i c}} \sum_{n \geq 1} \frac{\lambda_f(n)}{n^{1 / 2+i t+w}} V_1\left(\frac{n}{N}\right)\left(\frac{ t}{2\pi}\right)^{w} \times e^{i\pi w / 2}\frac{G(w)}{w} \mathrm{~d} w\\
	&+\int_{\varepsilon-i T^\varepsilon}^{\varepsilon+i T^\varepsilon} \sum_{\substack{N \leq T^{1+\varepsilon} \nonumber\\
			N\,d y a d i c}} \sum_{n \geq 1} \frac{\lambda_f(n)}{n^{1 / 2-i t+w}} V_1\left(\frac{n}{N}\right)\left(\frac{t}{2 \pi e}\right)^{-2it}\left(\frac{t}{2 \pi}\right)^{w}
	e^{i\pi(-  w / 2+ 1/2)} \frac{G(w)}{w} \mathrm{~d} w+O\left(T^{-1/ 2+\varepsilon}\right) \Bigg)\nonumber\\
	&\times\Bigg(\int_{\varepsilon-i T^{\varepsilon}}^{\varepsilon+i T^{\varepsilon}} \sum_{\substack{M \leq T^{1/ 2+\varepsilon} \\
			M\,d y a d i c}} \sum_{m \geq 1} \frac{1}{m^{1 / 2-i t+w}} V_1\left(\frac{m}{M}\right)\left(\frac{ t}{2\pi}\right)^{ w / 2} \times e^{- i \pi  w / 4} \frac{G(w)}{w} \mathrm{~d} w \nonumber\\
	& +\int_{\varepsilon-i T^\varepsilon}^{\varepsilon+i T^\varepsilon} \sum_{\substack{M \leq T^{1/ 2+\varepsilon} \\
			M\,d y a d i c}} \sum_{m \geq 1} \frac{1}{m^{1 / 2+i t+w}} V_1\left(\frac{m}{M}\right)\left(\frac{t}{2 \pi e}\right)^{i t}\left(\frac{t}{2 \pi}\right)^{ w / 2}
	e^{i \pi( w / 4-1/4)}  \frac{G(w)}{w} \mathrm{~d} w+O\left(T^{-3/ 4+\varepsilon}\right)\Bigg)\nonumber\\
	&\times\Bigg(\int_{\varepsilon-i T^{\varepsilon}}^{\varepsilon+i T^{\varepsilon}} \sum_{\substack{K \leq T^{1/ 2+\varepsilon} \\
			K\,d y a d i c}} \sum_{k \geq 1} \frac{1}{k^{1 / 2+i t+w}} V_1\left(\frac{k}{K}\right)\left(\frac{ t}{2\pi}\right)^{ w / 2} \times e^{i \pi  w / 4} \frac{G(w)}{w} \mathrm{~d} w \nonumber\\
	&+\int_{\varepsilon-i T^\varepsilon}^{\varepsilon+i T^\varepsilon} \sum_{\substack{K \leq T^{1/ 2+\varepsilon} \\
			K\,d y a d i c}} \sum_{k \geq 1} \frac{1}{k^{1 / 2-i t+w}} V_1\left(\frac{k}{K}\right)\left(\frac{t}{2 \pi e}\right)^{-i t}\left(\frac{t}{2 \pi}\right)^{ w / 2}
	e^{-i \pi  (w / 4-1/4)}  \frac{G(w)}{w} \mathrm{~d} w+O\left(T^{-3/ 4+\varepsilon}\right)\Bigg)\mathrm{~d} t.\nonumber
	\end{align}
	For convenience, we'll write \eqref{eight} as
	$$
	\begin{aligned}
	\frac{1}{(2 \pi i)^3}\int_{\mathbb{R}}V\left(\frac{t}{T}\right)\left[\left(S_1+S_2\right)\left(S_3+S_4\right)\left(S_5+S_6\right)+S_7\right]\mathrm{~d} t,
	\end{aligned}
    $$
    where $S_7$ comes from the error term of \eqref{eight}. Multiplying out the summand above leads to several cross terms, which we will analyze one by one.
	
	\noindent \textbf{Lemma 4.1} Given any $\varepsilon>0$, we have
        $$
		\begin{aligned}
		I_1\coloneq\frac{1}{(2 \pi i)^3}\int_{\mathbb{R}}V\left(\frac{t}{T}\right)S_{1}S_{3}S_{5}\mathrm{~d} t=\frac{c}{2}L(1,f)T\log T+c_{f}T +O(T^{\frac{1}{2}+\varepsilon}),
	\end{aligned}
    $$
    where $c$ and $c_{f}$ are constants as defined in Theorem \ref{t1}.
	\begin{proof} 
		We can exchange the order of integral and summation, rewrite $I_1$ as
		$$
		\begin{aligned}
			&\frac{1}{(2\pi i)^3}\int_{\varepsilon-iT^\varepsilon}^{\varepsilon+iT^\varepsilon}\int_{\varepsilon-iT^\varepsilon}^{\varepsilon+iT^\varepsilon}\int_{\varepsilon-iT^\varepsilon}^{\varepsilon+iT^\varepsilon}\sum_{\substack{N \leq T^{1+\varepsilon} \\
					N\,d y a d i c}} \sum_{n \geq 1}  \frac{\lambda_{f}(n)}{n^{1/2+u}}V_1\left(\frac{n}{N}\right)\sum_{\substack{M \leq T^{1/2+\varepsilon} \\
					M\,d y a d i c}} \sum_{m\geq 1} \frac{1}{m^{1/2+v}}V_1\left(\frac{m}{M}\right)\sum_{\substack{K \leq T^{1/2+\varepsilon} \\
					K\,d y a d i c}} \sum_{k\geq 1}\\&\times \frac{1}{k^{1/2+w}}V_1\left(\frac{k}{K}\right){\left(\frac{1}{2\pi}\right)}^{\frac{2u+v+w}{2}}e^{\frac{i\pi(2u-v+w)}{4}}\frac{G(u)G(v)G(w)}{uvw}\int_{\mathbb{R}}V\left(\frac{t}{T }\right)t^{\frac{2u+v+w}{2}}e^{it\log\frac{m}{nk}}\mathrm{~d} t\mathrm{~d} u\mathrm{~d} v\mathrm{~d} w.
		\end{aligned}
		$$
		We first deal with the $t$-integral. It is easy to see that the main term in $I_1$ consists of those terms with $m=nk$. Making a change of variable $t=T\xi$, we get
		\begin{equation}\label{main}
			\begin{aligned}
				\frac{T}{(2\pi i)^3}&\int_{\varepsilon-iT^\varepsilon}^{\varepsilon+iT^\varepsilon}\int_{\varepsilon-iT^\varepsilon}^{\varepsilon+iT^\varepsilon}\int_{\varepsilon-iT^\varepsilon}^{\varepsilon+iT^\varepsilon}\int_{\mathbb{R}}\sum_{\substack{N \leq T^{1/2+\varepsilon} \\
						N\,d y a d i c}} \sum_{n \geq 1}  \frac{\lambda_{f}(n)}{n^{1+u+v}}V_1\left(\frac{n}{N}\right)\sum_{\substack{K \leq T^{1/2+\varepsilon} \\
						K\,d y a d i c}} \sum_{k\geq 1}\frac{1}{k^{1+v+w}}V_1\left(\frac{k}{K}\right)\\&\times V(\xi)  {\left(\frac{T\xi}{2\pi}\right)}^{\frac{2u+v+w}{2}}e^{\frac{i\pi(2u-v+w)}{4}}\frac{G(u)G(v)G(w)}{uvw}\mathrm{~d} \xi\mathrm{~d} u\mathrm{~d} v\mathrm{~d} w.
			\end{aligned}
		\end{equation}
	    First, shifting the lines of integration to $\operatorname{Re}(u)=-1/2+\varepsilon$, we pick up the residue of
	    $$
	    \begin{aligned}
	    	\frac{T}{(2\pi i)^2}&\int_{\varepsilon-iT^\varepsilon}^{\varepsilon+iT^\varepsilon}\int_{\varepsilon-iT^\varepsilon}^{\varepsilon+iT^\varepsilon}\int_{\mathbb{R}}\sum_{\substack{N \leq T^{1/2+\varepsilon} \\
	    			N\,d y a d i c}} \sum_{n \geq 1}  \frac{\lambda_{f}(n)}{n^{1+v}}V_1\left(\frac{n}{N}\right)\sum_{\substack{K \leq T^{1/2+\varepsilon} \\
	    			K\,d y a d i c}} \sum_{k\geq 1}\frac{1}{k^{1+v+w}}V_1\left(\frac{k}{K}\right)\\&\times V(\xi){\left(\frac{T\xi}{2\pi}\right)}^{\frac{v+w}{2}}e^{\frac{i\pi(-v+w)}{4}}\frac{G(v)G(w)}{vw}\mathrm{~d} \xi\mathrm{~d} v\mathrm{~d} w
	    \end{aligned}
	    $$
	    at $u=0$. The integrals on $\operatorname{Re}(u)=-1/2+\varepsilon$, $\operatorname{Re}(v)=\operatorname{Re}(w)=\varepsilon$ can be bounded by
	    \begin{equation}\label{bound}
	    	\begin{aligned}
	    		\Bigg|T^{1/2+\varepsilon}\sum_{\substack{N \leq T^{1/2+\varepsilon} \\
	    				N\,d y a d i c}} \sum_{n \geq 1}  \frac{\lambda_{f}(n)}{n^{1/2+\varepsilon}}V_1\left(\frac{n}{N}\right)\sum_{\substack{K \leq T^{1/2+\varepsilon} \\
	    				K\,d y a d i c}} \sum_{k\geq 1}\frac{1}{k^{1+\varepsilon}}V_1\left(\frac{k}{K}\right)\Bigg|.
	    	\end{aligned}	
	    \end{equation}
	    We can remove the weight function $V_1$ by the Mellin inversion. Thus \eqref{bound} equals
	    \begin{equation}\label{mellin}
	    	\begin{aligned}
	    		\Bigg|\frac{T^{1/2+\varepsilon}}{(2\pi i)^2}\int_{(0)}\int_{(0)}\sum_{\substack{N \leq T^{1/2+\varepsilon} \\
	    				N\,d y a d i c}} \sum_{n \geq 1}  \frac{\lambda_{f}(n)}{n^{1/2+\varepsilon}}{\left(\frac{n}{N}\right)}^{-s_1}\sum_{\substack{K \leq T^{1/2+\varepsilon} \\
	    				K\,d y a d i c}} \sum_{k\geq 1}\frac{1}{k^{1+\varepsilon}}\left(\frac{k}{K}\right)^{-s_2}\widetilde {V_{1}}(s_1)\widetilde{V_{1}}(s_2)\mathrm{~d} s_1\mathrm{~d} s_2\Bigg|.
	    	\end{aligned}	
	    \end{equation}
	    Using Lemma \ref{5}, we have that the upper bound of the sum of $n$ and $k$ in \eqref{mellin} is $O(1)$. Integrating by parts several times, we have that
	    $$
	    \begin{aligned}
	    	\widetilde {V_{1}}(s)=-\frac{1}{s}\int_{0}^{\infty}V_{1}^{\prime}(x)x^{s}\mathrm{~d} x\ll\frac{1}{(1+|s|)^A}
	    \end{aligned}
	    $$
	    for any $A\geq 1$. This allows us to reduce the $s$-integral to $O(1)$. Hence \eqref{bound} is bounded in a standard way by $O(T^{1/2+\varepsilon})$, as established in Lemma \ref{5}. Since the condition $N\leq T^{1/2+\varepsilon}$ can be removed with an acceptable error term of $O(T^{-2024})$, the $n$-sum within the integral simplifies $L(1+v,f)$. Similarly, the $k$-sum within the integral evaluates $\zeta(1+v+w)$. We now move the line of integration over $v$ to $\operatorname{Re}(v)=-1+\varepsilon$. In doing so we encounter simple poles at $v=0$ and $v=-w$ whose residues we next calculate. The integral on $\operatorname{Re}(v)=-1+\varepsilon$, $\operatorname{Re}(w)=\varepsilon$ contributes $O(T^{1/2+\varepsilon})$. The contribution from the residue at $v=-w$ is
		$$
		\begin{aligned}
			\frac{T}{2\pi i}\int_{\mathbb{R}}V(\xi) \mathrm{~d}\xi\int_{\varepsilon-iT^\varepsilon}^{\varepsilon+iT^\varepsilon}\frac{L(1-w,f)}{-w^2}e^{i\pi w/2}G(-w)G(w)\mathrm{~d} w.
		\end{aligned}
		$$		
		Finally we consider the contribution of the residue at $v=0$, namely
		$$
		\begin{aligned}
			\frac{T}{2\pi i}\int_{\mathbb{R}}V(\xi)\int_{\varepsilon-iT^\varepsilon}^{\varepsilon+iT^\varepsilon}\frac{L(1,f)\zeta(1+w)}{w}\left(\frac{T\xi}{2\pi}\right)^{w/2}e^{i\pi   w/4}G(w) \mathrm{~d} w\mathrm{~d} \xi.
		\end{aligned}
		$$
		We now move the line of integration over $w$ to $\operatorname{Re}(w)=-1+\varepsilon$, encountering a double pole at $w=0$. The integral on the new line can be bounded in a standard way by $O(T^{1/2+\varepsilon})$, and the residue of the double pole at $w=0$  is easily seen to be
		$$
		\begin{aligned}
		TL(1,f)\int_{\mathbb{R}}V(\xi)\left[\frac{i\pi}{4}+\frac{1}{2}\log\left(\frac{T\xi}{2\pi}\right)\right]\mathrm{~d} \xi.
		\end{aligned}
	    $$

		For $m\neq nk$ in $I_1$, we note that $I_1$ has the following estimate:
		$$
		\begin{aligned}
			I_1\ll T^{\varepsilon } \sup _{u \in\left[\varepsilon-i T^{\varepsilon}, \varepsilon+i T^{\varepsilon}\right]} \sup _{\substack{N \leq T^{1+\varepsilon} \\ M \leq T^{1/2+\varepsilon} \\ K \leq T^{1/2+\varepsilon}}}|I_{a}|,
		\end{aligned}
		$$ 
		where
		$$
		\begin{aligned}
			I_{a}=\int_{\mathbb{R}}V\left(\frac{t}{T}\right)\sum_{n\geq1}\frac{\lambda_{f}(n)}{n^{1/2+it+u}}V_1\left(\frac{n}{N}\right)\sum_{m\geq1}\frac{1}{m^{1/2-it+u}}V_1\left(\frac{m}{M}\right)\sum_{k\geq1}\frac{1}{k^{1/2+it+u}}V_1\left(\frac{k}{K}\right)t^{2u}\mathrm{~d} t.
		\end{aligned}
		$$ 
		Now letting $t=T\xi$, and then merging $t^{2u}$ into $V$ to get $V_2$, we have
		$$
		\begin{aligned}
			I_{a}=T^{1+2\varepsilon}\sum_{n\geq1}\frac{\lambda_{f}(n)}{n^{1/2+u}}V_1\left(\frac{n}{N}\right)\sum_{m\geq1}\frac{1}{m^{1/2+u}}V_1\left(\frac{m}{M}\right)\sum_{k\geq1}\frac{1}{k^{1/2+u}}V_1\left(\frac{k}{K}\right)\int_{\mathbb{R}}\frac{V_2(\xi)}{T^{2\varepsilon}}e^{iT\xi\log \frac{m}{nk}}\mathrm{~d}\xi.
		\end{aligned}
		$$ 
		We first consider the $\xi$-integral above. Let
		$$
		\varphi(\xi)=T\xi\log \frac{m}{nk},
		$$
		then we have
		$$
		\begin{aligned}
			\varphi^{\prime}(\xi)&=T\log\left(\frac{m}{nk}\right)\\
			&\gg T\min\left\{\frac{|nk-m|}{m},1\right\}\\&\gg T^{1/2-\varepsilon},\\
		\end{aligned}
		$$
		$$
		\begin{aligned} 
			\varphi^{(j)}(\xi)=0\leq T^{1/2-\varepsilon}\quad(j\geq2).
		\end{aligned}
		$$
		Hence, the integral in $I_{a}$ satisfies condition (1) in Lemma \ref{6}, where $Z=1$, $Y=T^{1/2-\varepsilon}$ and $X=\Delta$. Thus we have 
		$$
		\begin{aligned}
			\int_{\mathbb{R}}\frac{V_2(\xi)}{T^{2\varepsilon}} e^{i\xi T\log \frac{m}{nk}}\mathrm{~d}\xi\ll T^{-2024}.
		\end{aligned}
		$$
		Furthermore we have $I_{a}=O(T^{-2024})$. Having said all of above, this completes the proof.
	\end{proof}
	
	\noindent \textbf{Lemma 4.2} Given any $\varepsilon>0$, we have
	$$
	\begin{aligned}
		I_2\coloneq\frac{1}{(2 \pi i)^3}\int_{\mathbb{R}}V\left(\frac{t}{T}\right)S_{1}S_{3}S_{6}\mathrm{~d} t=O(T^{1/2+\varepsilon}),
	\end{aligned}
	$$
	$$
	\begin{aligned}
		I_3\coloneq\frac{1}{(2 \pi i)^3}\int_{\mathbb{R}}V\left(\frac{t}{T}\right)S_{1}S_{4}S_{5}\mathrm{~d} t=O(T^{1/2+\varepsilon}).
	\end{aligned}
	$$
	\begin{proof}
		The proofs of $I_2$ and $I_3$ are similar, so we show only the former. We note that $I_2$ has the following estimate:
		$$
		\begin{aligned}
			I_2\ll T^{\varepsilon } \sup _{u \in\left[\varepsilon-i T^{\varepsilon}, \varepsilon+i T^{\varepsilon}\right]} \sup _{\substack{N \leq T^{1+\varepsilon} \\ M \leq T^{1/2+\varepsilon} \\ K \leq T^{1/2+\varepsilon}}}|I_{b}|,
		\end{aligned}
		$$ 
		where
		$$
		\begin{aligned}
			I_{b}=\int_{\mathbb{R}}V\left(\frac{t}{T}\right)\sum_{n\geq1}\frac{\lambda_{f}(n)}{n^{1/2+it+u}}V_1\left(\frac{n}{N}\right)\sum_{m\geq1}\frac{1}{m^{1/2-it+u}}V_1\left(\frac{m}{M}\right)\sum_{k\geq1}\frac{1}{k^{1/2-it+u}}V_1\left(\frac{k}{K}\right){\left(\frac{t}{2\pi e}\right)}^{-it}t^{2u}\mathrm{~d} t.
		\end{aligned}
		$$ 
		Letting $t=T\xi$, and then merging $t^{2u}$ into $V$ to get $V_2$, we have
		$$
		\begin{aligned}
			I_{b}=T\sum_{n\geq1}\frac{\lambda_{f}(n)}{n^{1/2+u}}V_1\left(\frac{n}{N}\right)\sum_{m\geq1}\frac{1}{m^{1/2+u}}V_1\left(\frac{m}{M}\right)\sum_{k\geq1}\frac{1}{k^{1/2+u}}V_1\left(\frac{k}{K}\right)\int_{\mathbb{R}}V_2(\xi)e^{-iT\xi\log\frac{T\xi n}{2\pi emk}}\mathrm{~d}\xi.
		\end{aligned}
		$$ 
		Now we deal with $\xi$-integral. Let 
		$$
		\begin{aligned}
			\varphi(\xi)=-T\xi(\log T\xi+\log\frac{n}{2\pi emk}).
		\end{aligned}
		$$
		Hence
		$$
		\begin{aligned}
			\varphi^{\prime}(\xi)=-T\log\frac{T\xi n}{2\pi mk},
		\end{aligned}
		$$
		$$
		\begin{aligned}
			\varphi^{\prime\prime}(\xi)=-\frac{T}{\xi},\quad 
			\varphi^{(j)}(\xi)\asymp T\:(j\geq2).
		\end{aligned}
		$$
		We define the part of $I_b$ that satisfies $nT>4\pi mk$ and $2nT\leq mk\pi$ as $I_{b1}$, and we write the rest $I_{b2}$. Since $\varphi^{\prime}(\xi)\gg T$, then the integral $I_{b1}$ satisfies condition $(1)$ in Lemma \ref{6}, where $Z=1$, $Y=T$ and $X=\Delta$. Thus we have 
		$$
		\begin{aligned}
			I_{b1}\ll T^{-2024}.
		\end{aligned}
		$$
		If $mk\pi /2< nT\leq 4\pi mk$, $|\varphi^{\prime}(\xi)|\leq T\log 4$. Now there is $\xi_0=\frac{2\pi mk}{Tn}$, such that $\varphi^{\prime}(\xi_0)=0$. Then the integral satisfies condition $(2)$ in Lemma \ref{6}. Thus we have
		$$
		\begin{aligned}
			\int_{\mathbb{R}}V_2(\xi) e^{i\varphi(\xi)}\mathrm{~d}\xi=e\left(\frac{mk}{n}\right)\frac{i\sqrt{2\pi mk}}{T\sqrt{n}}V_3(\xi_0)+O(T^{-2024}),
		\end{aligned}
		$$
		where $V_3(\xi)$ is a fixed smooth function supported on $\xi\asymp 1$. We note that $I_{b2}$ is bounded by
		\begin{equation}\label{Ib2bound}
		\sup _{\substack{N \ll T^{1+\varepsilon}  \\ M \ll T^{1/2+\varepsilon}\\ K \ll T^{1/2+\varepsilon}}} \sup _{M K\asymp TN}\left| \sum_{n \geq 1}\lambda_{f}(n) \frac{1}{n^{1+u}} V_2\left(\frac{n}{N}\right)\sum_{m \geq 1}\frac{1}{m^{u}} V_2\left(\frac{m}{M}\right)\sum_{k \geq 1}\frac{e\left(\frac{mk}{n}\right)}{k^{u}} V_2\left(\frac{k}{K}\right)V_3\left(\frac{2\pi mk}{Tn}\right)\right|+O\left(T^{-2024}\right),
		\end{equation}
	    Since $MK\asymp TN$, we can truncate the $n$-sum above at $n\leq T^{2\varepsilon}$.\\
	    \textbf{Case 1:} $n\nmid m$, we have
	    $$
	    \begin{aligned}
	    \sum_{k\leq T^{1/2+\varepsilon}}e\left(\frac{mk}{n}\right)\leq\operatorname{min}\left\{T^{1/2+\varepsilon},{\left\|\frac{m}{n}\right\|}^{-1}\right\}\leq n\leq T^{2\varepsilon},
	    \end{aligned}
        $$
        where $\|\alpha\|$ denotes the distance of $\alpha$ to the nearest integer. Then by partial summation, we get the $k$-sum is bounded by $T^{\varepsilon}$. Applying Lemma \ref{3}, we get that 
        $$
        \begin{aligned}
        I_{b2}\ll T^{1/2+\varepsilon}.
        \end{aligned}
        $$
        \textbf{Case 2:} $n\mid m$, which means $e\left(\frac{mk}{n}\right)=1$. For the sum over $m$ and $k$ in \eqref{Ib2bound}, we apply the method of exponent pairs with A-process (see for example \cite[Chapter 3]{graham1991van}), by taking the exponent pair $(p,q)$ as
        $$
        \begin{aligned}
        (p,q)=\left(\frac{13}{84},\frac{55}{84}\right),
        \end{aligned}
        $$
        where $(p,q)$ according to Bourgain \cite{bourgain2017decoupling}. Hence we obtain
        $$
		\begin{aligned}
				I_{b2} & \ll \sup _{\substack{N \ll T^{2\varepsilon} \\
						M \ll T^{1/2+\varepsilon}\\K \ll T^{1/2+\varepsilon}}} \sup _{M K \asymp TN}\left|{\left(\frac{1}{M}\right)}^{p}M^{q} \cdot {\left(\frac{1}{K}\right)}^{p}K^{q}\right|+O\left(T^{-2024}\right) \\
				& \ll \sup _{\substack{N \ll T^{2\varepsilon} \\
						M \ll T^{1/2+\varepsilon}\\K \ll T^{1/2+\varepsilon}}} \sup _{M K \asymp TN}\left|(MK)^{1/2} \right|+O\left(T^{-2024}\right) \\
				& \ll T^{1/2+\varepsilon}.
		\end{aligned}
		$$
		Now using the estimates of $I_{b1}$ and $I_{b2}$ we obtain the lemma.
	\end{proof}
	
	\noindent \textbf{Lemma 4.3} We have
	$$
	\begin{aligned}
	 	I_4\coloneq\frac{1}{(2 \pi i)^3}\int_{\mathbb{R}}V\left(\frac{t}{T}\right)S_{1}S_{4}S_{6}\mathrm{~d} t=O(T^{-2024}),
	\end{aligned}
	$$
	$$
	\begin{aligned}
		I_5\coloneq\frac{1}{(2 \pi i)^3}\int_{\mathbb{R}}V\left(\frac{t}{T}\right)S_{2}S_{3}S_{5}\mathrm{~d} t=O(T^{-2024}),
	\end{aligned}
	$$
	$$
	\begin{aligned}
		I_6\coloneq\frac{1}{(2 \pi i)^3}\int_{\mathbb{R}}V\left(\frac{t}{T}\right)S_{2}S_{3}S_{6}\mathrm{~d} t=O(T^{-2024}),
	\end{aligned}
	$$
	$$
	\begin{aligned}
		I_8\coloneq\frac{1}{(2 \pi i)^3}\int_{\mathbb{R}}V\left(\frac{t}{T}\right)S_{2}S_{4}S_{6}\mathrm{~d} t=O(T^{-2024}).
	\end{aligned}
	$$
	\begin{proof}
	The above results are direct applications of the lemma \ref{6}, and we will only discuss the proof of $I_5$. We note that $I_5$ has the following estimate:
	$$
	\begin{aligned}
		I_5\ll T^{\varepsilon } \sup _{u \in\left[\varepsilon-i T^{\varepsilon}, \varepsilon+i T^{\varepsilon}\right]} \sup _{\substack{N \leq T^{1+\varepsilon} \\ M \leq T^{1/2+\varepsilon} \\ K \leq T^{1/2+\varepsilon}}}|I_{c}|,
	\end{aligned}
	$$ 
	where
	$$
	\begin{aligned}
		I_{c}=\int_{\mathbb{R}}V\left(\frac{t}{T}\right)\sum_{n\geq1}\frac{\lambda_{f}(n)}{n^{1/2-it+u}}V_1\left(\frac{n}{N}\right)\sum_{m\geq1}\frac{1}{m^{1/2-it+u}}V_1\left(\frac{m}{M}\right)\sum_{k\geq1}\frac{1}{k^{1/2+it+u}}V_1\left(\frac{k}{K}\right){\left(\frac{t}{2\pi e}\right)}^{-2it}t^{2u}\mathrm{~d} t.
	\end{aligned}
	$$ 
	Letting $t=T\xi$, and then merging $t^{2u}$ into $V$ to get $V_2$, we have
	$$
	\begin{aligned}
		I_{c}=T\sum_{n\geq1}\frac{\lambda_{f}(n)}{n^{1/2+u}}V_1\left(\frac{n}{N}\right)\sum_{m\geq1}\frac{1}{m^{1/2+u}}V_1\left(\frac{m}{M}\right)\sum_{k\geq1}\frac{1}{k^{1/2+u}}V_1\left(\frac{k}{K}\right)\int_{\mathbb{R}}V_2(\xi)e^{-iT\xi\log\frac{T^{2}{\xi}^{2}k }{4{\pi}^{2}e^{2}mn}}\mathrm{~d}\xi.
	\end{aligned}
	$$ 
	Now we deal with $\xi$-integral. Let
	$$
	\varphi(\xi)=-T\xi\log\frac{T^{2}{\xi}^{2}k }{4{\pi}^{2}e^{2}mn}.
	$$
	Then we have
	$$
	\begin{aligned}
		\varphi^{\prime}(\xi)=-T\log\frac{T^{2}{\xi}^{2}k }{4{\pi}^{2}mn}\asymp T\log T,
	\end{aligned}
	$$
	$$
	\begin{aligned}
		\varphi^{\prime\prime}(\xi)=-2\frac{T}{\xi},\quad 
		\varphi^{(j)}(\xi)\asymp T\:(j\geq2).
	\end{aligned}
	$$
	Hence, the integral in $I_{c}$ satisfies condition (1) in Lemma \ref{6}, where $Z=1$, $Y=T^{1+\varepsilon}$ and $X=\Delta$. Thus we have 
	$$
	\begin{aligned}
		\int_{\mathbb{R}}V_2(\xi)e^{-iT\xi\log\frac{T^{2}{\xi}^{2}k }{4{\pi}^{2}e^{2}mn}}\mathrm{~d}\xi\ll T^{-2024}.
	\end{aligned}
	$$
	Furthermore we have $I_{c}=O(T^{-2024})$. This completes the proof. 
	
	\end{proof}
	\noindent \textbf{Lemma 4.4} Given any $\varepsilon>0$, we have
	$$
	\begin{aligned}
		I_7\coloneq\frac{1}{(2 \pi i)^3}\int_{\mathbb{R}}V\left(\frac{t}{T}\right)S_{2}S_{4}S_{5 }\mathrm{~d} t=O(T^{1/2+\varepsilon}).
	\end{aligned}
	$$
	\begin{proof}
		We note that $I_7$ has the following estimate:
		$$
		\begin{aligned}
			I_7\ll T^{\varepsilon } \sup _{u \in\left[\varepsilon-i T^{\varepsilon}, \varepsilon+i T^{\varepsilon}\right]} \sup _{\substack{N \leq T^{1+\varepsilon} \\ M \leq T^{1/2+\varepsilon}\\ K \leq T^{1/2+\varepsilon}}}|I_{d}|,
		\end{aligned}
		$$ 
		where
		$$
		\begin{aligned}
			I_{d}=\int_{\mathbb{R}}V\left(\frac{t}{T}\right)\sum_{n\geq1}\frac{\lambda_{f}(n)}{n^{1/2-it+u}}V_1\left(\frac{n}{N}\right)\sum_{m\geq1}\frac{1}{m^{1/2+it+u}}V_1\left(\frac{m}{M}\right)\sum_{k\geq1}\frac{1}{k^{1/2+it+u}}V_1\left(\frac{k}{K}\right){\left(\frac{t}{2\pi e}\right)}^{-it}t^{2u}\mathrm{~d} t.
		\end{aligned}
		$$ 
		Letting $t=T\xi$, and merging $t^{2u}$ into $V$ to get $V_2$, we have
		$$
		\begin{aligned}
			I_{d}=T\sum_{n\geq1}\frac{\lambda_{f}(n)}{n^{1/2+u}}V_2\left(\frac{n}{N}\right)\sum_{m\geq1}\frac{1}{m^{1/2+u}}V_1\left(\frac{m}{M}\right)\sum_{k\geq1}\frac{1}{k^{1/2+u}}V_1\left(\frac{k}{K}\right)\int_{\mathbb{R}}V_2(\xi)e^{-iT\xi\log\frac{T\xi km }{2\pi en}}\mathrm{~d}\xi.
 		\end{aligned}
		$$
		Let
		$$\varphi(\xi)=-T\xi\log\frac{T\xi km }{2\pi en}.
		$$
		Then we have
		$$
		\begin{aligned}
			\varphi^{\prime}(\xi)=-T\log \frac{T\xi km}{2\pi n},
		\end{aligned}
		$$
		$$
		\begin{aligned} 
			\varphi^{\prime\prime}(\xi)=-\frac{T}{\xi},\quad
			\varphi^{(j)}(\xi)\asymp T\:(j\geq2).
		\end{aligned}
		$$
		We define the part of $I_d$ that satisfies $kmT>4\pi n$ and $2kmT\leq n\pi$ as $I_{d1}$, and we write the rest $I_{d2}$. Since $\varphi^{\prime}(\xi)\gg T$, then the integral $I_{d1}$ satisfies condition $(1)$ in Lemma \ref{6}, where $Z=1$, $Y=T$ and $X=\Delta$. Thus we have 
		$$
		\begin{aligned}
			I_{d1}\ll T^{-2024}.
		\end{aligned}
		$$
		Otherwise, $|\varphi^{\prime}(\xi)|\leq T\log 4$. Now there is $\xi_0=\frac{2\pi n}{Tkm}$, such that $\varphi^{\prime}(\xi_0)=0$. Then the integral satisfies condition $(2)$ in Lemma \ref{6}. Thus we have
		$$
		\begin{aligned}
			\int_{\mathbb{R}}V_2(\xi) e^{i\varphi(\xi)}\mathrm{~d}\xi=e\left(\frac{ n}{km}\right)\frac{i\sqrt{2\pi n}}{T\sqrt{km}}V_3(\xi_0)+O(T^{-2024}),
		\end{aligned}
		$$
		where $V_3(\xi)$ is a fixed smooth function supported on $\xi\asymp 1$. Then we have that 
		$$
		\begin{aligned}
			I_{d2}&=\sqrt{2\pi}i\sum_{kmT\asymp n}\frac{\lambda_{f}(n)}{n^{u}}\frac{e\left(\frac{n}{km}\right)}{(km)^{1+u}}V_2\left(\frac{n}{N}\right)V_2\left(\frac{m}{M}\right)V_2\left(\frac{k}{K}\right)V_3(\frac{2\pi n}{Tkm})+O(T^{-2024}).
		\end{aligned}
		$$
		By applying the Poisson summation and Lemma \ref{5}, we arrive at
		$$
		\begin{aligned}
			I_{d2}\ll T^{1/2+\varepsilon}+O(T^{-2024}).
		\end{aligned}
		$$
		Combining the upper bound of $I_{d1}$ and $I_{d2}$, we complete the proof.
	\end{proof}
	\noindent \textbf{Lemma 4.5} Given any $\varepsilon>0$, we have
	$$
	\begin{aligned}
			R\coloneq\frac{1}{(2 \pi i)^3}\int_{\mathbb{R}}V\left(\frac{t}{T}\right)S_{7}\mathrm{~d} t=O(T^{1/2+\varepsilon}).
	\end{aligned}
	$$
	\begin{proof}
		We can convert $R$ into the following form:
		$$
		\begin{aligned}
			R\ll T^{\varepsilon } \sup _{u \in\left[\varepsilon-i T^{\varepsilon}, \varepsilon+i T^{\varepsilon}\right]} \sup _{\substack{N \leq T^{1+\varepsilon} \\ M \leq T^{1/2+\varepsilon}\\ K \leq T^{1/2+\varepsilon}}}|R_{1}+R_{2}+R_{3}+R_{4}+R_{5}|,
		\end{aligned}
		$$
		where 
		$$
		\begin{aligned}
			&R_{1}=T^{-3/4}\int_{\mathbb{R}}V\left(\frac{t}{T}\right)\sum_{n\geq1}\frac{\lambda_{f}(n)}{n^{1/2+it+u}}V_1\left(\frac{n}{N}\right)\sum_{m\geq1}\frac{1}{m^{1/2-it+u}}V_1\left(\frac{m}{M}\right)t^{3u/2}\mathrm{~d} t,\\
			&R_{2}=T^{-1/2}\int_{\mathbb{R}}V\left(\frac{t}{T}\right)\sum_{m\geq1}\frac{1}{m^{1/2-it+u}}V_1\left(\frac{m}{M}\right)\sum_{k\geq1}\frac{1}{k^{1/2+it+u}}V_1\left(\frac{k}{K}\right)t^{u}\mathrm{~d} t,\\ 
			&R_{3}=T^{-3/2}\int_{\mathbb{R}}V\left(\frac{t}{T}\right)\sum_{n\geq1}\frac{\lambda_{f}(n)}{n^{1/2+it+u}}V_1\left(\frac{n}{N}\right)t^{u}\mathrm{~d} t,\\
			&R_{4}=T^{-5/4}\int_{\mathbb{R}}V\left(\frac{t}{T}\right)\sum_{m\geq1}\frac{1}{m^{1/2-it+u}}V_1\left(\frac{m}{M}\right)t^{u/2}\mathrm{~d} t,\\
			&R_{5}=T^{-2}\int_{\mathbb{R}}V\left(\frac{t}{T}\right)\mathrm{~d} t.
		\end{aligned}
		$$
		For $R_{1}$, by Cauchy-Schwarz inequality, Lemma \ref{3} and Lemma \ref{7}, we have
		$$ 
		\begin{aligned}
			R_{1}&\ll T^{-3/4+\varepsilon}\left(\int_{T}^{2T}\left|\sum_{n\geq1}V_1\left(\frac{n}{N}\right)\frac{\lambda_{f}(n)}{n^{1/2+it+u}}\right|^{2}\mathrm{~d} t\right)^{1/2}\left(\int_{T}^{2T}\left|\sum_{m\geq1}V_1\left(\frac{m}{M}\right)\frac{1}{m^{1/2-it+u}}\right|^{2}\mathrm{~d} t\right)^{1/2}\\
			&\ll T^{-3/4+\varepsilon}\left[(T+N)\sum_{n\geq1}V_1\left(\frac{n}{N}\right)\frac{|\lambda_{f}(n)|^{2}}{n^{1+2\varepsilon}}(T+M)\sum_{m\geq1}V_1\left(\frac{m}{M}\right)\frac{1}{m^{1+2\varepsilon}}\right]^{1/2}\\
			&\ll T^{1/4+\varepsilon}.
		\end{aligned}
		$$ 
		The rest of the proof is in complete agreement with the proof of $R_1$, hence it suffices to estimate $R_1$. Finally we can obtain that the upper bound of $R$ is $O(T^{1/2+\varepsilon})$.  
	\end{proof}
	
	Theorem \ref{t1} follows upon combining the previous lemmas.
	
	\section{Proof of Corollaries}
	\subsection{Proof of Corollary \ref{c3}}
	Note that from \eqref{Ivic}, we have
	$$
	\begin{aligned}
		\int_{T}^{T+T/\Delta}\left|\zeta\left(\frac{1}{2}-it\right)\right|^4\mathrm{~d} t\ll \frac{T^{1+\varepsilon}}{\Delta}+T^{2/3+\varepsilon}.
	\end{aligned}
    $$
    In particular, if $\Delta\leq T^{1/3}$, we have
    $$
    \begin{aligned}
    	\int_{T}^{T+T/\Delta}\left|\zeta\left(\frac{1}{2}-it\right)\right|^4\mathrm{~d} t\ll \frac{T^{1+\varepsilon}}{\Delta};
    \end{aligned}
    $$
    see also \cite{iwaniec1980fourier}. Similarly, under the same assumption $\Delta\leq T^{1/3}$ and from \eqref{good}, we have
    $$
    \begin{aligned}
		\int_{T}^{T+T/\Delta}\left|L\left(\frac{1}{2}+it, f\right)\right|^2\mathrm{~d} t\ll\frac{T^{1+\varepsilon}}{\Delta}.
	\end{aligned}
	$$
	Similar to \cite[section 6]{lin2021analytic}, we choose the smooth function $V$ to be supported on $[1,2]$ and $V(x)=1$ on $[1+1/\Delta,2-1/\Delta]$. Then Theorem \ref{t1} yields
	$$
	\begin{aligned}
		\int_{T}^{2T}L\left(\frac{1}{2}+it, f\right){\left|\zeta\left(\frac{1}{2}-it\right)\right|}^2\mathrm{~d} t&=\frac{c}{2}L(1,f)T\log T+c_{f}T +O(T^{\frac{1}{2}+\varepsilon})\\&+\int_{T}^{T+T/\Delta}\left(1-V\left(\frac{t}{T}\right)\right)L\left(\frac{1}{2}+it, f\right){\left|\zeta\left(\frac{1}{2}-it\right)\right|}^2\mathrm{~d} t\\&+\int_{2T-T/\Delta}^{2T}\left(1-V\left(\frac{t}{T}\right)\right)L\left(\frac{1}{2}+it, f\right){\left|\zeta\left(\frac{1}{2}-it\right)\right|}^2\mathrm{~d} t.
	\end{aligned}
	$$
	From Cauchy-Schwarz inequality, this implies that
	$$
	\begin{aligned}
		\int_{T}^{2T}L\left(\frac{1}{2}+it, f\right){\left|\zeta\left(\frac{1}{2}-it\right)\right|}^2\mathrm{~d} t=\frac{c}{2}L(1,f)T\log T+c_{f}T +O(T^{\frac{1}{2}+\varepsilon})+O(T^{1+\varepsilon}/\Delta) .
	\end{aligned}
	$$
	Corollary \ref{c3} then follows by choosing $\Delta=T^{1/3}$.
	
	\subsection{Proof of Corollary \ref{c4}}
	Corollary \ref{c4} is obtained in the same way as above section.
	
	\subsection*{Acknowledgements}The author thanks Prof. Yongxiao Lin for proposing the project and for his guidance. She likes to thank Prof. Bingrong Huang for pointing an error in the first version of the paper. The author was supported by the National Key R\&D Program of China (No.2021YFA1000700). 
	
	\bibliographystyle{abbrv}
	\bibliography{ref}
\end{document}